\newcommand{\rr}{\mathbb{R}}
\newcommand{\nn}{\mathbb{N}}
\newcommand{\zz}{\mathbb{Z}}
\newcommand{\cc}{\mathbb{C}}
\DeclareMathOperator{\newimaginarypart}{Im}
\renewcommand{\Im}{\newimaginarypart}
\DeclareMathOperator{\ind}{ind}
\newcommand{\<}{\left\langle}
\renewcommand{\>}{\right\rangle}
\newcommand{\Rmnum}[1]{\expandafter\@\romancap\romannumeral #1@}
\newtheorem{theorem}{Theorem}[section]
\newtheorem{prop}[theorem]{Proposition}
\newtheorem{cor}[theorem]{Corollary}
\newtheorem{lemma}[theorem]{Lemma}
\newtheorem{remark}[theorem]{Remark}
\theoremstyle{definition}
\newtheorem{mydef}[theorem]{Definition}
\renewcommand{\epsilon}{\varepsilon}
\DeclareMathOperator{\Log}{Log}
\DeclareMathOperator{\gauge}{G}
\DeclareMathOperator{\heisen}{Heis}
\newcommand{\fkX}{\mathfrak{X}}
\newcommand{\cale}{\mathcal{E}}
\newcommand{\B}{\mathfrak{B}}
\newcommand{\h}{\mathcal{H}}
\newcommand{\K}{\mathcal{K}}
\newcommand{\bh}{\B(\h)}
\newcommand{\tbeta}{\beta'}
\newcommand{\Ctb}{C_{\tbeta}}
\newcommand{\caln}{\mathcal{N}}
\newcommand{\calu}{\mathcal{U}}
\newcommand{\calo}{\mathcal{O}}
\newcommand{\torus}{\mathbb{T}}
\newcommand{\uh}{\calu(\h)}
\newcommand{\id}{\mathrm{id}}
\begin{document}

\title{Continuous families of E$_0$-semigroups}
\author{Ilan Hirshberg}
\author{Daniel Markiewicz}
\address{Department of Mathematics,
Ben-Gurion University of the Negev, P.O.B. 653, Be'er Sheva, 84105,
Israel.}
\email{ilan@math.bgu.ac.il}\email{danielm@math.bgu.ac.il}

\thanks{I.H. was partially supported by Israel Science Foundation grant 1471/07 and
D.M. was partially supported by U.S.-Israel Binational Science Foundation grant 2008295.
}

\begin{abstract}
We consider families of E$_0$-semigroups continuously parametrized by a compact Hausdorff space, which are cocycle-equivalent to a given E$_0$-semigroup $\beta$. When the gauge group of $\beta$ is a Lie group, we establish a correspondence between such families and principal bundles whose structure group is the gauge group of $\beta$. 
\end{abstract}
\subjclass[2000]{46L55}
\maketitle

Let $\h$ be a Hilbert space, which we will always assume to be separable and infinite-dimensional, and let $\bh$ denote the $*$-algebra
of all bounded operators over $\h$. 
An \emph{$E_0$-semigroup acting on $\bh$} is a point-$\sigma$-weakly continuous family $\beta =\{\beta_t: \bh \to \bh\}_{t\geq0}$ of unital $*$-endomorphisms
such that $\beta_0 = \id$. We direct the reader to \cite{arv-monograph} for a general reference on the theory of E$_0$-semigroups.

This paper studies continuous families of E$_0$-semigroups parametrized by a compact Hausdorff space, where the E$_0$-semigroups are all cocycle equivalent to a given E$_0$-semigroup $\beta$. 
Suitable notions of continuity and equivalence are introduced below. If the gauge group $G$ of $\beta$ is a Lie group,  we show that such continuous families are classified by principal $G$-bundles. Gauge groups of E$_0$-semigroups were computed by Arveson in \cite{arv-analogues} for the type I case. In the type II case, the gauge groups were computed recently for several classes of examples by Alevras, Powers and Price in \cite{alevras-powers-price} and by Jankowski and the second author in \cite{jankowski-markiewicz}. Indeed, in many of the known examples the gauge group is a Lie group. Specializing to the case of E$_0$-semigroups of type I, one can recast those principal bundles as vector bundle invariants. The case of continuous families of single endomorphisms of $\bh$ (of finite index) was studied in \cite{hirshberg-endo-stable-cts-trace}, where it was shown that such families of endomorphisms are classified by vector bundle invariants of dimension given by the index of the endomorphism. We thus obtain an analogy between the case of continuous families of endomorphisms and continuous families of E$_0$-semigroups. 

In the case of families of one-parameter automorphism groups, the gauge group is $\rr$, hence the 
principal bundle invariants are trivial. We treat this case separately, since we establish this triviality result under (a priori) weaker continuity assumptions, using techniques from \cite{bargmann}. This corresponds to a parametrized version of Wigner's theorem.

Given an $E_0$-semigroup $\beta$ acting on $\bh$ we will say that a strongly continuous family of unitary operators
$\{ U_t  \in \uh: t\geq 0 \}$  is a \emph{$\beta$-cocycle} if $U_0=1$ and
$U_{t+s}=U_t\beta_t(U_s)$ for all $t,s \geq 0$. We emphasize that in this paper all  cocycles will be unitary cocycles. We will denote
by $C_\beta$ the set of all $\beta$-cocycles. An $E_0$-semigroup $\alpha$ is \emph{cocycle equivalent} to $\beta$
if there exists a $\beta$-cocycle $U_t$ such that $\alpha_t(X)=U_t \beta_t(X) U_t^*$ for all $t\geq 0$ and $X\in \bh$. We will denote by  $\cale_\beta$ be
the set of all $E_0$-semigroups acting on $\bh$ which are cocycle equivalent to the
$E_0$-semigroup $\beta$. 

If $\h$ is separable, then the unitary group $\uh$ is a Polish group when endowed with the relative strong operator topology. Recall that a Polish space is a topological space with a separable completely metrizable topology, and a Polish group is a topological group whose topology is Polish. Let $\rr_+$ denote the half-open interval $[0,\infty)$. Let us endow $C(\rr_+, \uh)$ with pointwise multiplication and the compact open topology. Since $\uh$ is Polish, we have that $C(\rr_+, \uh)$ is a Polish group (this can be proven using Theorem~4.19 of \cite{kechris-descriptive}).

Let $\beta$ be an E$_0$-semigroup acting on $\bh$. Every $\beta$-cocycle  $\{U_t : t \in \rr_+ \}$ can be seen as an element of $C(\rr_+, \uh)$, and we will always endow the set $C_\beta$ with the relative topology in $C(\rr_+,\uh)$. Since $C_\beta$ is clearly closed in $C(\rr_+,\uh)$, it follows that it is also a Polish space.

Recall that the gauge group $\gauge_\beta$ is the set of local (unitary) $\beta$-cocycles: $\gauge_\beta = \{ U \in C_\beta : U_t\beta_t(A)=\beta_t(A)U_t, \forall A \in \bh, \forall t\geq 0 \}$. Notice that $G_\beta$ is a closed subgroup of $C(\rr_+, \uh)$, hence it is a Polish group.

There is a natural (right) action of $G_\beta$ on $C(\rr_+, \uh)$ given by
$(U \cdot Q)_t = U_tQ_t$ for $U \in C(\rr_+, \uh)$, $Q \in G_\beta$. Furthermore,  $C_\beta$ is invariant under this action. Let $\pi: C_\beta \to \cale_\beta$ be given by $\pi(U) = \alpha$ where  $\alpha_t(A)=U_t \beta_t(A) U_t^*$ for all $A\in\bh$ and $t\geq 0$. One readily checks that the fibers of $\pi$ are exactly the orbits of the $G_\beta$ action.

We may thus endow $\cale_\beta$ with the quotient topology of $C_\beta/G_\beta$. If $\fkX$ is a compact Hausdorff space,  a continuous map $\alpha:\fkX \to \cale_\beta$ will be said to be a \emph{continuous family of E$_0$-semigroups parametrized by $\fkX$} in the cocycle-equivalence class of $\beta$. We shall also denote the set of all such continuous families by $\cale_\beta(\fkX)$. Given $\alpha \in \cale_\beta(\fkX)$, we shall denote by $\alpha^x$ the E$_0$-semigroup at the point $x \in \fkX$.

Let $\fkX$ be a compact Hausdorff space.  If $\theta\in \cale_\beta(\fkX)$ 
is a continuous family of E$_0$-semigroups, we shall say that a family 
$\{U^x : x \in  \fkX\}$ is a \emph{continuous family of $\theta$-cocycles} if 
$U^x$ is an $\theta^x$-cocycle for each $x \in \fkX$ and the map $x \mapsto U^x$ is 
continuous as a map $\fkX \to C(\rr_+,\uh)$.  We shall say that two continuous parametrized families $\theta, \sigma \in \cale_\beta(\fkX)$  are \emph{equivalent} if there exists a continuous family of $\sigma$-cocycles $U$ such that $\theta_t^x(A) = U_t^x \sigma_t^x(A) U_t^{x\,*}$ for all $t \in \rr_+$, $x \in \fkX$ and $A \in \bh$ (this is clearly an equivalence relation). Note that for any two families $\theta, \sigma \in \cale_\beta(\fkX)$ and for every $x \in\fkX$ it is possible to choose a $\theta^x$-cocycle $U^x$ such that $\sigma_x(A)=U^x_t \theta_t^x(A) U_t^{x\,*}$  for all $t\geq 0$ and $A\in\bh$, however it may not be possible to choose the cocycles in a continuous way with respect to $x$. We will say that a continuous family $\theta \in\cale_\beta(\fkX)$ is \emph{trivializable} if it is equivalent to the constant family equal to $\beta$ for all $x\in\fkX$.

We note that there are other natural notions of continuity for families of E$_0$-semigroups. We will say that the family of E$_0$-semigroups $\{\alpha^x \in \cale_\beta: x \in\fkX \}$ is \emph{$\sigma$-weakly continuous} if for any $A \in \bh$, the map $\fkX \times \rr_+ \to \bh$ given by $(x,t) \mapsto \alpha^x_t(A)$ is $\sigma$-weakly continuous. It is straightforward to check that any continuous family of E$_0$-semigroups is also $\sigma$-weakly continuous, however we do not know whether the converse holds. We note (Remark \ref{two-toplogies}) however, that for one-parameter automorphism groups the two notions do coincide, and thus it is conceivable that they coincide for E$_0$-semigroups as well.

\section{Preliminaries}
In this section we recall a few results concerning $G$-bundles. 

 Let $G$ be a locally compact group with identity $e$. A \emph{$G$-space} is a topological space $\fkX$ with a fixed continuous right action of $G$ (all our $G$-spaces will be right $G$-spaces).  We will say that $G$ acts \emph{freely} on $\fkX$ if for every $g \in G$, $g\neq e$ and for every $x\in \fkX$, $xg\neq x$.

 Given a free action of $G$ on $\fkX$, let $R=\{ (x,y) : \exists g\in G, xg=y \}$ denote the corresponding orbit equivalence relation endowed with the relative topology in $\fkX\times\fkX$. Since $G$ acts freely, given a pair $(x,y)\in R$ there exists a unique element $\Delta(x,y)\in G$ such that $y=x\Delta(x,y)$.

A free action of $G$ on a space $\fkX$ determines a $G$-bundle $\xi=(\fkX, \pi, \fkX/G)$ where $\pi:\fkX \to \fkX/G$ is the quotient map. We will say that $\xi$  is a \emph{principal G-bundle} if the map $\Delta:R\to G$ described above is continuous (we follow the terminology of \cite{husemoller}; this is called a Cartan principal $G$-bundle in \cite{palais}).

We emphasize that a principal $G$-bundle is not assumed to be locally trivial.  Under additional assumptions on $G$ and $\fkX$, however, local triviality is automatic.

\begin{theorem}[\cite{palais}, p.315]\label{thm-palais}
Let $G$ be a Lie group acting freely on a completely regular topological space $\fkX$, and let $\xi=(\fkX, \pi, \fkX/G)$ be the corresponding $G$-bundle. If $\xi$ is a principal $G$-bundle then it is locally trivial.
 \end{theorem}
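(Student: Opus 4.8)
The plan is to reduce local triviality to the existence of continuous local sections of the quotient map $\pi$, and then to produce such sections from the Lie structure of $G$ together with the continuity of the translation map $\Delta$.

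First I would establish the reduction. Suppose that for a given $b_0 \in \fkX/G$ one can find an open neighborhood $V \ni b_0$ and a continuous section $s\colon V \to \fkX$ of $\pi$, so that $\pi \circ s = \id_V$. Then define $\psi\colon V\times G \to \pi^{-1}(V)$ by $\psi(b,g) = s(b)\cdot g$. This map is continuous (it is the composition of $s\times\id_G$ with the action), $G$-equivariant, and compatible with the projections. Freeness makes $\psi$ a bijection: given $x \in \pi^{-1}(V)$, the points $s(\pi(x))$ and $x$ lie on the same orbit, so $x = s(\pi(x))\cdot g$ for a unique $g \in G$. The inverse is $x \mapsto \bigl(\pi(x),\,\Delta(s(\pi(x)),x)\bigr)$, and its continuity is exactly where the principal (Cartan) hypothesis enters: since $(s(\pi(x)),x)\in R$ depends continuously on $x$ and $\Delta\colon R \to G$ is continuous by assumption, the inverse is continuous. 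Thus $\psi$ is a $G$-equivariant homeomorphism, i.e.\ a local trivialization over $V$.

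It then remains to construct a local section near an arbitrary $x_0 \in \fkX$, equivalently a \emph{local slice}: a set $S \ni x_0$ meeting each orbit in at most one point and with $\pi|_S$ a homeomorphism onto an open neighborhood of $b_0 = \pi(x_0)$; one then takes $s = (\pi|_S)^{-1}$. This is the heart of the matter and the only place the Lie hypothesis is essential. I would first note that the orbit map $g \mapsto x_0\cdot g$ is an embedding of $G$ onto the orbit $O_{x_0}$, since its inverse is the restriction of $\Delta(x_0,\cdot)$ and is therefore continuous. Fixing an exponential chart carrying a neighborhood of $e$ diffeomorphically onto an open set in $\rr^n$, $n = \dim G$, this identifies a neighborhood of $x_0$ in $O_{x_0}$ with a neighborhood of $0$ in $\rr^n$. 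The idea is to extend the locally defined ``$G$-coordinate'' $y \mapsto \Delta(x_0,y)$ from (a closed piece of) the orbit to a continuous $G$-valued function $\rho$ on a neighborhood $W$ of $x_0$ with $\rho(x_0)=e$, and to set $S = \rho^{-1}(e)\cap W$. Complete regularity supplies enough continuous functions to carry out this extension (passing to a compact Hausdorff, hence normal, compactification and applying Tietze coordinatewise in the chart), while the continuity of $\Delta$ is used again to verify that $S$ is genuinely transverse, i.e.\ that $\pi|_S$ is injective and open onto its image.

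The main obstacle will be precisely this slice construction. Unlike the compact or proper case, where one averages over $G$ to manufacture invariant transversals, here $G$ may be noncompact and the action need not be proper, so the Cartan condition must substitute for properness throughout. The delicate point is to arrange the extended coordinate $\rho$ to be equivariant enough near $x_0$ that its zero set hits each nearby orbit exactly once and projects homeomorphically; this requires a careful argument rather than a bare appeal to the separation axioms. Once a slice is built at $x_0$, translating it by $G$ and invoking equivariance yields slices along the whole orbit, and the reduction of the first paragraph upgrades the section to a trivialization over a neighborhood of $b_0$.
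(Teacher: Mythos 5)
Your first paragraph is correct and complete: given a continuous local section $s\colon V\to\fkX$ of $\pi$, the map $\psi(b,g)=s(b)\cdot g$ is a continuous $G$-equivariant bijection onto $\pi^{-1}(V)$, and its inverse $x\mapsto\bigl(\pi(x),\Delta(s(\pi(x)),x)\bigr)$ is continuous precisely because $\Delta$ is. So local triviality does reduce to the existence of local sections, equivalently (for a free action) of slices. Be aware, though, that the paper offers no proof of this theorem at all---it is quoted from Palais---and Palais's own argument follows exactly the route you describe: slice theorem first, then the trivialization via $\Delta$. In other words, everything you have actually proved is the easy half, and the half you have left open is the entire content of the cited result.

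That open half is a genuine gap, which you yourself flag. A Tietze-type extension $\rho$ of the orbit coordinate $y\mapsto\Delta(x_0,y)$ to a neighborhood of $x_0$ cannot do the job on its own: for $S=\rho^{-1}(e)$ to meet each nearby orbit exactly once and for $\pi|_S$ to be open, one needs $\rho$ to be locally \emph{equivariant}, $\rho(y\cdot g)=\rho(y)g$, and a bare continuous extension carries no such structure. Already for the free, proper action of $\rr$ on $\rr^2$ by horizontal translation, a continuous extension of the coordinate along the axis $\{y=0\}$ can perfectly well be non-monotone on nearby horizontal lines, so its zero set meets those orbits several times and $\pi|_S$ fails to be injective; nothing in complete regularity or in the continuity of $\Delta$ rules this out for a function produced by Tietze. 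Manufacturing an equivariant $\rho$ is the actual theorem: for compact $G$ one averages over the group (Gleason), and for noncompact $G$ Palais has to develop his integration lemmas, the notion of a near-slice, and an approximation argument that converts the Cartan condition into equivariance. Since your proposal explicitly defers this (``requires a careful argument rather than a bare appeal to the separation axioms'') without supplying it, it is an accurate plan whose core step is missing rather than a proof.
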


We will say that a continuous action of a locally compact group $G$ on a completely regular space $X$ makes it a \emph{proper $G$-space} if every point $x \in \fkX$ has a neighborhood $U_x$ which is \emph{small} in the following sense:  each $y \in \fkX$ has a neighborhood $V_y$ such that $\{g \in G : U_x \cap (V_y\cdot g) \neq \varnothing\}$ has compact closure in $G$. We recall the following.

\begin{prop}[\cite{palais}, Proposition 1.2.8]\label{proper-quotient} If $\fkX$ is proper $G$-space, then $\fkX/G$ is completely regular.
\end{prop}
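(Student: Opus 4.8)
The plan is to reduce the statement to the construction of a continuous $G$-invariant function on $\fkX$ that separates a point from a disjoint invariant closed set, and then to build such a function by averaging over $G$ against a (left) Haar measure, using properness to guarantee that the relevant integrals are finite and depend continuously on the base point.

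For the reduction, recall that $\fkX/G$ carries the quotient topology, so the quotient map $\pi\colon \fkX\to\fkX/G$ is continuous and, by the universal property of the quotient, every continuous $G$-invariant function $F\colon\fkX\to\rr$ factors through a continuous function $\bar F\colon\fkX/G\to\rr$. Thus, given $\bar x=\pi(x)\in\fkX/G$ and a closed set $\bar C\subseteq\fkX/G$ with $\bar x\notin\bar C$, I would set $C=\pi^{-1}(\bar C)$, which is closed (as $\pi$ is continuous), $G$-invariant (as $\pi(y\cdot g)=\pi(y)$), and satisfies $x\notin C$. It then suffices to produce a continuous $G$-invariant $F\colon\fkX\to[0,\infty)$ with $F(x)>0$ and $F\equiv 0$ on $C$; composing the descended $\bar F$ with a suitable continuous map $[0,\infty)\to[0,1]$ yields the required separating function on $\fkX/G$, establishing complete regularity.

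To build $F$, I would first use the properness hypothesis to choose a small neighborhood $U_x$ of $x$; since smallness is inherited by subsets and $C$ is closed with $x\notin C$, I may shrink $U_x$ so that $U_x\cap C=\varnothing$. By regularity of the completely regular space $\fkX$ I would pick a continuous $\phi\colon\fkX\to[0,1]$ with $\phi(x)=1$ and $\supp\phi\subseteq U_x$, and then define $F(y)=\int_G \phi(y\cdot g)\,dg$, where $dg$ is a left Haar measure on the locally compact group $G$. Right $G$-invariance is then formal: $F(y\cdot g_0)=\int_G\phi(y\cdot(g_0 g))\,dg=\int_G\phi(y\cdot g)\,dg=F(y)$ by left-invariance of $dg$. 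Moreover $F(x)>0$, because $g\mapsto\phi(x\cdot g)$ is continuous and equals $1$ at $g=e$, hence is bounded below by a positive constant on a neighborhood of $e$ of positive Haar measure; and $F\equiv 0$ on $C$, since $C$ is $G$-invariant and $\supp\phi\cap C=\varnothing$, so $\phi(y\cdot g)=0$ for all $g$ whenever $y\in C$.

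The main obstacle, and the only place properness is used quantitatively, is the finiteness and continuity of $F$. For a fixed base point $y_0$, I would invoke smallness of $U_x$ to obtain a neighborhood $V_{y_0}$ of $y_0$ for which $K:=\overline{\{g\in G: U_x\cap(V_{y_0}\cdot g)\neq\varnothing\}}$ is compact. For $y\in V_{y_0}$, the relation $\phi(y\cdot g)\neq 0$ forces $y\cdot g\in\supp\phi\subseteq U_x$, while also $y\cdot g\in V_{y_0}\cdot g$; hence $g$ lies in the fixed relatively compact set above, so $F(y)=\int_K\phi(y\cdot g)\,dg$ for all $y\in V_{y_0}$. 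Since $(y,g)\mapsto\phi(y\cdot g)$ is jointly continuous and bounded by $1$, and $K$ has finite Haar measure, finiteness is immediate and continuity of $F$ on $V_{y_0}$ follows from the continuity of integrals depending on a parameter (dominated convergence). As $y_0$ was arbitrary, $F$ is continuous on $\fkX$, which completes the construction and hence the proof.
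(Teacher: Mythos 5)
The paper itself contains no proof of this proposition --- it is quoted from Palais --- so your attempt can only be compared with the cited source, and in substance it follows the classical argument: reduce complete regularity of the quotient to producing a continuous $G$-invariant function on $\fkX$ that is positive at $x$ and vanishes on the invariant closed set $C=\pi^{-1}(\bar C)$, and build that function by integrating a bump function with small support against left Haar measure, with properness guaranteeing that, locally in the base point, the integration takes place over a single compact set $K\subseteq G$. This core is correct, including the two delicate points: invariance of $F$ via left invariance of Haar measure (correctly matched to the right action), and the localization $F(y)=\int_K\phi(y\cdot g)\,dg$ for $y\in V_{y_0}$, which gives finiteness. Two small repairs are worth making. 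First, complete regularity of $\fkX$ directly provides $\phi$ with $\{\phi\neq 0\}\subseteq U_x$, which is all your argument ever uses; the stronger claim $\supp\phi\subseteq U_x$ is also attainable (replace $\phi$ by $\max(2\phi-1,0)$), but should not be asserted as immediate. Second, since $\fkX$ need not be first countable, dominated convergence (a sequential statement) is not the right tool for continuity of $F$; instead use the standard fact that joint continuity of $(y,g)\mapsto\phi(y\cdot g)$ plus compactness of $K$ gives $\sup_{g\in K}|\phi(y\cdot g)-\phi(y_0\cdot g)|\to 0$ as $y\to y_0$, whence $|F(y)-F(y_0)|\to 0$.

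The one substantive gap concerns the separation axiom. What you prove is exactly the ``points separated from closed sets by functions'' property, which by itself does not imply T$_1$ or Hausdorff (an indiscrete space satisfies it). In Palais's paper all spaces are completely regular \emph{and Hausdorff}, and the present paper needs that stronger reading too: it feeds this proposition into Urysohn's metrization theorem ($\cale_\beta$ second countable and regular, hence metrizable), which fails without T$_1$. So a complete proof must also show that $\fkX/G$ is T$_1$, i.e.\ that orbits are closed. This is again a consequence of properness (indeed of the weaker Cartan condition): given $y\in\overline{xG}$, take a small neighborhood $U$ of $y$ and a neighborhood $V$ of $x$ such that $K=\overline{\{g\in G: U\cap (V\cdot g)\neq\varnothing\}}$ is compact; for any net with $x g_\alpha\to y$, eventually $x g_\alpha\in U\cap(V\cdot g_\alpha)$, hence eventually $g_\alpha\in K$, so a subnet $g_{\alpha'}\to g_0\in K$ converges, and then $y=\lim x g_{\alpha'}=x g_0\in xG$ by continuity of the action and Hausdorffness of $\fkX$. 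With closed orbits, T$_1$ together with your function-separation property yields the Tychonoff property of $\fkX/G$, which is the statement the paper actually uses.
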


An open cover $\{U_\lambda \}_{\lambda \in \Lambda}$ for a topological space $B$ is \emph{numerable} provided that there exists a locally finite partition of unity $\{f_\lambda\}_{\lambda\in\Lambda}$ such that $\overline{f_\lambda^{-1}((0,1])} \subseteq U_\lambda$ for all $\lambda$.

A principal $G$-bundle $\xi=(E, p, B)$  is \emph{numerable} provided that there exists a numerable cover $\{U_\lambda\}_{\lambda \in \Lambda}$ for $B$ such that the restriction bundle $\xi|_{U_\lambda}$ is trivial for all $\lambda\in\Lambda$.

In particular, if  $\xi=(E, p, B)$ is  a locally trivial principal $G$-bundle and $B$ is  paracompact, then $\xi$ is numerable. We remark that a pullback of a numerable principal $G$-bundle is also numerable. For the following definition, see Chapter 4, Section 10 of \cite{husemoller}.

\begin{mydef} A numerable principal $G$-bundle $\xi_0=(E_0, p_0, B_0)$ is \emph{universal} if and only if
\begin{enumerate}
  \item For each numerable principal $G$-bundle $\xi = (E, p, B)$, there exists a continuous map $f: B \to B_0$ such that $\xi$ and the pull-back $f^*(\xi_0)$ are
      isomorphic;
  \item If $f,g : B \to B_0$ are two continuous maps such that $f^*(\xi_0)$ and $g^*(\xi_0)$ are isomorphic, then $f$ and $g$ are homotopic.
\end{enumerate}
\end{mydef}

The following result provides an effective criterion for establishing that a numerable principal bundle is universal.

\begin{theorem}[\cite{dold}, Theorem 7.5]\label{thm-dold}
A numerable principal $G$-bundle $\xi=(E, p, B)$ is universal if and only if $E$ is contractible.
\end{theorem}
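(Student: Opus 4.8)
The plan is to prove both implications by translating the classification problem for numerable principal $G$-bundles into a problem about sections of associated fibre bundles. The key dictionary I would establish first is this: for any numerable principal $G$-bundle $\eta=(E',p',B')$, the $G$-equivariant maps $\phi\colon E'\to E$ are in natural bijection with the sections of the associated fibre bundle $E'\times_G E\to B'$ (whose fibre is $E$), and each such $\phi$ induces on quotients a map $\bar\phi\colon B'\to B$ together with a canonical isomorphism $\eta\cong\bar\phi^{*}(\xi)$; conversely a classifying map $f\colon B'\to B$ equipped with an isomorphism $\eta\cong f^{*}(\xi)$ determines such a section. Thus existence of classifying maps corresponds to existence of sections, and homotopy-uniqueness of classifying maps corresponds to uniqueness of sections up to homotopy.

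For the implication ``$E$ contractible $\Rightarrow\xi$ universal'' I would use that a numerable fibre bundle is a Hurewicz fibration, and that a numerable fibration with contractible fibre is fibre-homotopy trivial; hence it admits a section and any two sections are vertically homotopic. Applying this to $E'\times_G E\to B'$, whose fibre $E$ is contractible, produces a section, hence a classifying map, giving axiom (1). For axiom (2), given $f,g\colon B'\to B$ with $f^{*}\xi\cong g^{*}\xi=:\eta$, the two isomorphisms yield two $G$-maps $E_\eta\to E$ covering $f$ and $g$ respectively, i.e. two sections of $E_\eta\times_G E\to B'$; a homotopy through $G$-maps connecting them descends to a homotopy $f\simeq g$.

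For the converse ``$\xi$ universal $\Rightarrow E$ contractible'' I would argue by comparison with a known model. One first notes that $p^{*}\xi$ over $E$ carries the diagonal section and is therefore trivial, so by axiom (2) the projection $p$ is null-homotopic; but this only shows $\mathrm{id}_E$ is homotopic to a map into a single fibre, which is not yet contractibility. To finish I would invoke Milnor's infinite-join construction (see \cite{husemoller}) of a numerable principal $G$-bundle $\zeta=(EG,p_M,BG)$ with $EG$ contractible, contractibility being elementary since the connectivity of an iterated join grows without bound. By the already-proved forward implication, $\zeta$ is universal, so mutual classification of $\xi$ and $\zeta$ yields $f\colon B\to BG$ and $g\colon BG\to B$ with $f^{*}\zeta\cong\xi$ and $g^{*}\xi\cong\zeta$; the uniqueness axiom then forces $g\circ f\simeq\mathrm{id}_B$ and $f\circ g\simeq\mathrm{id}_{BG}$, so $f$ is a homotopy equivalence.

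The main obstacle, and the final step, is to pass from this homotopy equivalence of bases to a homotopy equivalence of total spaces $E\simeq EG$ in the numerable (non-CW) setting, where Whitehead's theorem is unavailable. Here I would rely on Dold's fibrewise machinery from \cite{dold}: the bundle map $\tilde f\colon E\to EG$ covering $f$ is, after pulling back over a common base, a fibre-preserving map between numerable fibrations that is a fibrewise homotopy equivalence, hence a genuine homotopy equivalence. Composing gives $E\simeq EG\simeq *$, so $E$ is contractible. I expect this comparison step, rather than either bundle-theoretic reduction, to be the delicate point, precisely because it is where numerability (partitions of unity) must stand in for CW hypotheses.
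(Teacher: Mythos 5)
The paper offers no proof of this statement: it is quoted as Theorem 7.5 of \cite{dold} and used as a black box, so there is no internal argument to compare yours against; what follows assesses your proposal on its own terms. In substance you have reconstructed the standard proof from the literature, and the architecture is correct: the dictionary between $G$-equivariant maps $E'\to E$ and sections of the associated bundle $E'\times_G E\to B'$; Dold's local-to-global theorems (a numerable bundle is a Hurewicz fibration, and a numerable fibration with contractible fibres is shrinkable, so sections exist and any two are vertically homotopic) for the forward implication; and comparison with Milnor's join construction, whose universality follows from the already-proved forward implication, for the converse.

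Two steps need repair before this is a complete proof, though both are fixable by standard citable results rather than new ideas. First, your justification for the contractibility of Milnor's $EG$ is not right as stated: unbounded connectivity of the finite joins yields only \emph{weak} contractibility, and $EG$ carries Milnor's topology, for which CW-type is not available in general (certainly not for arbitrary topological groups $G$), so Whitehead-type reasoning cannot convert weak contractibility into contractibility. The correct and still elementary argument is the explicit two-step slide homotopy (push the coordinates into odd slots, then contract linearly onto a point in an even slot); this is how the contractibility of the infinite join is actually proved in \cite{husemoller}. Second, the phrase ``after pulling back over a common base'' in your final step should be replaced by a precise lemma. Either of the following works: (i) for a numerable fibration, pulling back along a homotopy equivalence of base spaces induces a homotopy equivalence of total spaces (a consequence of Dold's fibre homotopy invariance of pullbacks along homotopic maps), applied to the factorization $E\cong f^{*}(EG)\to EG$; or (ii) the equivariant route: $\tilde g\circ\tilde f$ is a $G$-map covering $g\circ f\simeq \mathrm{id}_B$, hence by the covering homotopy theorem for numerable principal bundles it is $G$-homotopic to a $G$-map covering $\mathrm{id}_B$, which is automatically an isomorphism; arguing symmetrically for $\tilde f\circ\tilde g$ shows $\tilde f$ is a homotopy equivalence, whence $E\simeq EG\simeq{*}$. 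With either patch, your outline is a correct proof of the theorem.
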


\section{Continuous families of automorphism groups}

The purpose of this section is to show that any $\sigma$-weakly continuous family of one-parameter automorphism groups is trivializable. In effect, this amounts to a version of Wigner's theorem with a parameter. This can be obtained by modifying Bargmann's proof of Wigner's theorem from \cite{bargmann}.

\begin{mydef}
Let $G$ be an abelian topological group and let $0\in\caln\subseteq\rr$ be an open interval. We say that a continuous map $\omega: \caln\times\caln  \to G$ is a \emph{local $G$-valued multiplier} on $\caln$ if
\begin{align}
\label{mult-prop1}
\omega(t,0) & =  \omega(0, t) = e_G \\
\label{mult-prop2}
\omega(t,s) \omega(t+s,r) & = \omega(s,r) \omega(t, s+r)
\end{align}
for all $r,s,t \in \caln$ such that the required sums are also in $\caln$. 
If $G$ is a Lie group, we will say that a $G$-multiplier $\omega$ is \emph{smooth} if it is a $C^\infty$ function. 
Given a topological space $\fkX$ and a Lie group $G$, we will say that a family $\{ \omega^x : x \in \fkX \}$ of $G$-multipliers on $\caln$ is continuous if the map from $\fkX \times \caln \times \caln$ to $G$ given by $(x,t,s) \mapsto \omega^x(t,s)$ is continuous. We will say that the family is \emph{smooth} if $\omega^x$ is smooth for every $x \in \fkX$ and the map $(x,t,s) \mapsto \partial_t^n \partial_s^m \omega^x(t,s)$ is continuous, for any $n,m \in \zz_+$.

Let $\torus=\{z \in \cc: |z|=1\}$. We will refer to local $\torus$-valued multipliers simply as \emph{local multipliers}, and we will refer to $\rr$-valued multipliers as \emph{additive local multipliers}.
\end{mydef}

\begin{lemma}\label{get-unitaries-and-multiplier}
Let $\fkX$ be a compact Hausdorff space and let $\{ \alpha^x : x\in\fkX\}$ be a family of $*$-automorphisms of $\bh$ such that for all $A\in\bh$, the map $(x,t) \mapsto \alpha_t^x(A)$ is $\sigma$-weakly continuous. Then there exists an open interval $0 \in\caln \subseteq \rr$ and a family of unitaries $\{U^x_t : t \in \caln, x\in \fkX \}$ which is jointly strongly continuous in $(x,t)$ and which satisfies
\begin{equation}\label{eq:unit-impl}
\alpha_t^x(A) = U^x_t A {U_t^x}^*, \qquad \forall A \in \bh,\forall t\in\caln, \forall x\in\fkX.
\end{equation}
Furthermore, there exists a unique continuous family of local multipliers $\{\omega^x: x\in\fkX\}$ on $\caln$ which satisfies
$$
U_t^x U_s^x = \omega^x(t,s) U_{t+s}^x
$$
for all $x\in\fkX$ and $t,s\in\caln$ such that $t+s \in \caln$. Moreover, by shrinking $\caln$ if necessary, $\sigma^x = -i\Log \omega^x$ (the principal branch of $\log)$ is a well-defined bounded local additive multiplier on $\caln$ and $\{\sigma^x : x\in\fkX\}$ is a continuous family of additive multipliers on $\caln$.
\end{lemma}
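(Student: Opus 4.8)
The plan is to construct the implementing unitaries by an explicit Bargmann-type formula, to deduce joint strong continuity from the given $\sigma$-weak continuity using the fact that weak and strong convergence coincide both for projections and for unitaries, and then to read off the multiplier and its logarithm. Throughout I use that each $\alpha^x$ is a one-parameter group, so $\alpha^x_0=\id$ and $\alpha^x_{t+s}=\alpha^x_t\circ\alpha^x_s$. Fix a unit vector $e\in\h$, let $P$ be the rank-one projection onto $\cc e$, and for $\psi\in\h$ let $T_\psi$ denote the rank-one operator $\eta\mapsto\<e,\eta\>\psi$, so that $T_e=P$. Since each $\alpha^x_t$ is a $*$-automorphism of $\bh$, $\alpha^x_t(P)$ is a rank-one projection, and $\alpha^x_0(P)=P$. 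The quantity $\|\alpha^x_t(P)e\|^2=\<e,\alpha^x_t(P)e\>$ is jointly continuous in $(x,t)$ and equals $1$ at $t=0$, so by compactness of $\fkX$ there is an open interval $0\in\caln\subseteq\rr$ on which it stays bounded away from $0$ uniformly in $x$. Thus $f^x_t:=\alpha^x_t(P)e/\|\alpha^x_t(P)e\|$ is a well-defined unit vector in the range of $\alpha^x_t(P)$, and I set $U^x_t\psi:=\alpha^x_t(T_\psi)f^x_t$. A direct computation using multiplicativity of $\alpha^x_t$ (exactly as in the implementation step of Wigner's theorem) shows that $U^x_t$ is unitary, implements $\alpha^x_t$, and that $U^x_0=1$.

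For the joint strong continuity I would argue in two stages. First, as $(x,t)\to(x_0,t_0)$ we have $\alpha^x_t(P)\to\alpha^{x_0}_{t_0}(P)$ $\sigma$-weakly; since these are projections, weak-operator convergence forces strong convergence, whence $f^x_t$ depends norm-continuously on $(x,t)$. Second, for fixed $\psi,\phi$ the quantity $\<\phi,U^x_t\psi\>=\<\phi,\alpha^x_t(T_\psi)f^x_t\>$ is jointly continuous, because $\alpha^x_t(T_\psi)$ is uniformly norm-bounded and $\sigma$-weakly continuous while $f^x_t$ is norm-continuous. Hence $(x,t)\mapsto U^x_t$ is weakly continuous, and because the $U^x_t$ are unitary, the identity $\|U^x_t\psi-U^{x_0}_{t_0}\psi\|^2=2\|\psi\|^2-2\Re\<U^x_t\psi,U^{x_0}_{t_0}\psi\>$ upgrades this to joint strong continuity.

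Next I would define the multiplier. For $t,s,t+s\in\caln$, both $U^x_tU^x_s$ and $U^x_{t+s}$ implement $\alpha^x_{t+s}$, so $(U^x_{t+s})^*U^x_tU^x_s$ commutes with all of $\bh$ and is therefore a scalar $\omega^x(t,s)\in\torus$; equivalently $U^x_tU^x_s=\omega^x(t,s)U^x_{t+s}$, which determines $\omega^x$ uniquely given the $U^x$. Fixing a unit vector $\psi_0$ gives $\omega^x(t,s)=\<U^x_{t+s}\psi_0,U^x_tU^x_s\psi_0\>$, jointly continuous by the strong continuity just established, so $\{\omega^x\}$ is a continuous family. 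Property \eqref{mult-prop1} follows from $U^x_0=1$, and the cocycle identity \eqref{mult-prop2} is the associativity $(U^x_tU^x_s)U^x_r=U^x_t(U^x_sU^x_r)$ rewritten in terms of $\omega^x$.

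Finally, since $\omega^x(0,0)=1$, continuity together with compactness of $\fkX$ lets me shrink $\caln$ once more so that $\omega^x(t,s)$ has argument in, say, $(-\pi/4,\pi/4)$ for all $x$ and all $t,s\in\caln$; then $\sigma^x:=-i\Log\omega^x$ is well-defined, real-valued, bounded, and jointly continuous in $(x,t,s)$. On this smaller interval the arguments are small enough that $\Log$ is additive on the products appearing in \eqref{mult-prop2}, so applying $-i\Log$ to the literal equality $\omega^x(t,s)\omega^x(t+s,r)=\omega^x(s,r)\omega^x(t,s+r)$ yields the additive cocycle identity, while $\sigma^x(t,0)=\sigma^x(0,t)=0$ is immediate; hence $\{\sigma^x\}$ is a continuous family of bounded additive local multipliers. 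The main obstacle is the continuous choice of the implementing unitaries — equivalently, a coherent choice of the phase/reference vector $f^x_t$ across all parameters — which is exactly the parametrized content of Wigner's theorem and the reason $\caln$ must be taken small; once jointly strongly continuous $U^x_t$ are in hand, the multiplier and its logarithm are essentially bookkeeping.
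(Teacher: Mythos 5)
Your proof is correct and follows essentially the same route as the paper: fix a rank-one projection, use $\sigma$-weak continuity together with compactness of $\fkX$ to get a uniform interval on which $\alpha^x_t(P)e$ stays away from zero, normalize to obtain the reference vectors, define $U^x_t$ by the same rank-one-operator formula, establish joint weak continuity, and upgrade to strong continuity using unitarity. The only difference is that you spell out the multiplier and logarithm steps (scalar commutant argument, associativity identity, shrinking $\caln$ so the principal branch of $\Log$ is additive), which the paper dismisses as ``straightforward'' and omits; your details there are accurate.
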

\begin{proof}
Let $\xi \in \h$ be a unit vector, and let $p$ be the minimal projection onto the space spanned by $\xi$. Since $\alpha^x_t(p)$ is a strongly jointly continuous family of minimal projections and $\alpha_0^x(p)=p$, notice that $\{(x,t) \in \fkX \times\rr : \|\alpha^x_t(p)\xi\|>1/2\}$ is an open set containing $\fkX \times \{0\}$. Thus, by compactness, there exists $\delta>0$ such that 
$\| \alpha^x_t(p)\xi \| > 1/2$ for all $x\in\fkX$, $|t|<\delta$. 
By normalization we obtain a norm-continuous family of normalized vectors $\eta^x_t \in \h$ such that $\alpha^x_t(p)\eta_t^x = \eta_t^x$ for all $x \in \fkX$ and $|t|<\delta$. Now define for all $x\in\fkX$, $|t|<\delta$, the map $U_t^x: \h \to \h$ given by
$$
U_t^x A \xi = \alpha_t(A) \eta_t^x
$$
A direct computation shows that this map is a well-defined isometry, which is surjective hence unitary, and that equation \eqref{eq:unit-impl} holds for all $A\in \bh$ and $(x,t) \in \fkX\times (-\delta,\delta)$. We observe that this family is weakly continuous since for all $t,s \in (-\delta,\delta), x,y\in\fkX$, $A\in \bh$ and $\psi \in \h$,
\begin{align*}
|\< (U_t^x - U_s^y ) A\xi, \psi\>| & = 
|\< \alpha_t^x(A)\eta_t^x - \alpha_s^y(A)\eta_s^y, \psi \> | \\
& \leq | \< \alpha_t^x(A)\eta_t^x - \alpha_t^x(A)\eta_s^y, \psi \>| + 
| \< \alpha_t^x(A)\eta_s^y - \alpha_s^y(A)\eta_s, \psi \>|  \\
& \leq \| A \| \| \eta_t^x - \eta_s^y \| \|\psi\| + | \< \alpha_t^x(A)\eta_s^y - \alpha_s^y(A)\eta_s^y, \psi \> |
\end{align*}
and we are assuming that $\alpha_s^y(A)$ is $\sigma$-weakly continuous on $s,y$. Hence it is also strongly continuous since both topologies coincide on the unitary group. The verification of the remaining assertions of the lemma is straightforward so it will be omitted.
\end{proof}

The next two lemmas and the following theorem are a straightforward modification of Bargmann's work \cite{bargmann} on multipliers for the context of families with a parameter. For the reader's convenience we provide full details. Although we chose to focus on the case of actions of $\rr$ on $\bh$ by automorphisms, it is clear that analogous versions of the following results also hold for actions of Lie groups, by a similar modification of Bargmann's method.  In the following, given a set $S \subseteq \rr$, we will denote  $S+S = \{ t+s : t,s \in\ S \}$.

\begin{lemma}\label{diff-mult}
 Suppose that $\fkX$ is a compact Hausforff space. If $\{\sigma^x : x\in\fkX\}$ is a continuous family of bounded local additive multipliers on an open interval $0\in\caln\subseteq \rr$, then there exists a smooth family $\{ \zeta^x: x\in\fkX\}$ of bounded local additive multipliers on an open interval $0\in\caln'\subseteq \caln$  and a continuous function $\phi: \fkX \times (\caln'+\caln') \to \rr$ such that for all $x \in \fkX$ and $t,s \in \caln'$,
\begin{equation}\label{equiv1}
\zeta^x(t,s)  =  \sigma^x(t,s) -\phi(x, t+s) +\phi(x, t) + \phi(x, s), \quad \text{and} \quad
\phi(x,0) = 0.
\end{equation}
\end{lemma}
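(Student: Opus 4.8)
The plan is to produce $\zeta^x$ from $\sigma^x$ by convolving against a fixed smooth bump in each of the two variables separately, and to read off $\phi$ from the two resulting coboundary corrections. Fix a nonnegative $\rho \in C_c^\infty(\rr)$ with $\int \rho = 1$ whose support is a small interval around $0$, chosen small enough (after passing to an open subinterval $0\in\caln'\subseteq\caln$) that every translated argument appearing below stays inside $\caln$; this shrinking is exactly the source of the restriction to $\caln'$. Throughout I write the coboundary of a function $h$ of a single real variable as $(\delta h)(t,s)=h(t+s)-h(t)-h(s)$, and I note that $\delta h$ always satisfies the multiplier identity \eqref{mult-prop2} and is normalized in the sense of \eqref{mult-prop1} when $h(0)=0$. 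Consequently, subtracting a coboundary from $\sigma^x$ produces again a normalized additive multiplier, so that the candidate $\zeta^x$ will automatically be a (bounded) local additive multiplier.

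First I smooth the second variable. Set $\phi_1(x,t)=-\int \sigma^x(t,u)\,\rho(u)\,du$ and $\zeta^x_1:=\sigma^x-\delta\phi_1(x,\cdot)$. Using the multiplier identity \eqref{mult-prop2} in the form $\sigma^x(t+s,r)=\sigma^x(s,r)+\sigma^x(t,s+r)-\sigma^x(t,s)$ together with the change of variable $u=s+r$, a direct computation collapses $\zeta^x_1$ to
\begin{equation*}
\zeta^x_1(t,s)=\int \sigma^x(t,u)\,\bigl[\rho(u-s)-\rho(u)\bigr]\,du,
\end{equation*}
in which $s$ enters only through the smooth factor $\rho(u-s)$. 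Hence $\zeta^x_1$ is $C^\infty$ in $s$, with all $s$-derivatives obtained by differentiating $\rho$ under the integral sign, and these derivatives depend jointly continuously on $(x,t,s)$ because $(x,t,u)\mapsto\sigma^x(t,u)$ is continuous and $\rho$ has compact support.

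Next I smooth the first variable of $\zeta^x_1$. Since $\zeta^x_1$ is again a multiplier, the same identity \eqref{mult-prop2} applied with its slots arranged so as to peel off the first argument yields, for $\psi(x,s):=-\int \zeta^x_1(v,s)\,\rho(v)\,dv$ and $\zeta^x:=\zeta^x_1-\delta\psi(x,\cdot)$, the analogous formula
\begin{equation*}
\zeta^x(t,s)=\int \zeta^x_1(v,s)\,\bigl[\rho(v-t)-\rho(v)\bigr]\,dv.
\end{equation*}
Now $t$ enters only through the smooth factor $\rho(v-t)$, so $\zeta^x$ is $C^\infty$ in $t$; and because the $s$-dependence sits entirely inside $\zeta^x_1(v,s)$, which is already $C^\infty$ in $s$ with jointly continuous $s$-derivatives, one may differentiate freely in both variables under the integral, so $\zeta^x$ is jointly smooth and $\{\zeta^x\}$ is a smooth family. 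Setting $\phi(x,\cdot)=\phi_1(x,\cdot)+\psi(x,\cdot)$ and using linearity of $\delta$ gives $\zeta^x=\sigma^x-\delta\phi(x,\cdot)$, which is precisely \eqref{equiv1}; moreover $\phi(x,0)=0$ since $\sigma^x(0,\cdot)=0$ and $\zeta^x_1(\cdot,0)=0$, and $\phi$ is continuous on $\fkX\times(\caln'+\caln')$ by the same integral estimates used for the derivatives.

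The one genuinely delicate point — and the main thing to get right — is the bookkeeping of domains: one must choose $\caln'$ and $\supp\rho$ so that in each of the two integral formulas, and in the intermediate applications of \eqref{mult-prop2}, every argument fed to $\sigma^x$ (respectively $\zeta^x_1$) lies in $\caln$, where the multiplier is actually defined. Once this is arranged, boundedness of $\zeta^x$ is immediate from boundedness of $\sigma^x$ and the continuity (hence boundedness on the relevant compact sets) of $\phi$, and all the verifications above are the routine differentiation-under-the-integral-sign estimates, which I would omit.
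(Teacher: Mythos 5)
Your proof is correct and follows essentially the same route as the paper's: both smooth the second variable by subtracting the coboundary of $-\int \sigma^x(t,u)f(u)\,du$, use the multiplier identity \eqref{mult-prop2} to rewrite the result as a convolution-type integral, then repeat in the first variable and take $\phi$ to be the sum of the two corrections. The only differences are cosmetic (your coboundary notation $\delta$, and the paper makes the domain bookkeeping explicit via nested intervals $\caln' + \caln' \subseteq \caln_1$, $\caln_1 + \caln_1 \subseteq \caln$, which you correctly flag as the delicate point).
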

\begin{proof} 
Let $0\in\caln_1\subseteq\caln$ be an open interval such that $\caln_1 + \caln_1 \subseteq \caln$, and let $0 \in \caln' \subseteq\caln_1$ be an open interval such that $\caln' + \caln' \subseteq \caln_1$. Let $f:\rr \to \rr$ be a non-negative compactly supported $C^\infty$ function whose support is contained in $\caln'$ and moreover $\int_\rr f(x) dx =1$. Now define a function $a: \fkX \times \caln \to \rr$ by
$$
a(x,t) = - \int_\rr \sigma^x(t,r) f(r) dr.
$$
(The integrand vanishes outside $\caln'$). Then we have that $a(x,t)$ is continuous. Moreover, we have that the family $\{\theta^x : x\in\fkX\}$ given by
$$
\theta^x(t,s) = \sigma^x(t,s) - a(x,t+s) + a(x,t) + a(x,s), \qquad \forall t,s \in\caln_1
$$
is a continuous family of additive multipliers on $\caln_1$ since $a(x,0)=0$ for all $x\in\fkX$. Furthermore, notice that by using the multiplier property \eqref{mult-prop2},
\begin{align*}
\theta^x(t,s) & = \int_\rr \big[\sigma^x(t,s) + \sigma^x(t+s,r) - \sigma^x(t,r) - \sigma^x(s,r) \big] f(r) dr \\
& =  \int_\rr \big[\sigma^x(t,s+r) - \sigma^x(t,r) \big] f(r) dr \\
& =\int_\rr \sigma^x(t,u) \big[ f(u-s) - f(u) \big] du. 
\end{align*}
It follows that $\theta^x(t,s)$ is infinitely differentiable with respect to $s$. Now define $b:\fkX \times \caln_1 \to \rr$ by
$$
b(x,t) = - \int_\rr \theta^x(r,t) f(r) dr.
$$
Then we have that $b(x,t)$ is continuous and $b(x,0)=0$ for all $x\in\fkX$. By defining
$$
\zeta^x(t,s) = \theta^x(t,s) - b(x,t+s) + b(x,t) + b(x,s), \qquad \forall t,s \in\caln'
$$
we again obtain a continuous family $\{\zeta^x : x\in\fkX\}$ of additive multipliers on $\caln'$ and by computations analogous to those carried out above, we obtain for all $t,s\in\caln'$,
$$
\zeta^x(t,s) = \int_\rr \theta^x(u,s) \big[ f(u-t) - f(u) \big] du. 
$$
Thus for every $x\in\fkX$, $\zeta^x$ is a $C^\infty$ function, and it is clear that it constitutes a smooth family. Finally, note that \eqref{equiv1} is satisfied for $\phi(x,t) = a(x,t) + b(x,t)$.
\end{proof}

\begin{lemma}\label{trivial-mult}
Let $\fkX$ be a compact Hausdoff space. If $\{\zeta^x : x\in\fkX\}$ is a continuous family of bounded local additive multipliers on an open interval $0\in\caln\subseteq \rr$, then there exists an open interval $0 \subseteq \caln' \subseteq \caln$ and a continuous function $\phi: \fkX \times \caln \to \rr$ such that for all $x \in\fkX$ and $t,s \in\caln'$,
$$
\zeta^x(t,s) =  \phi(x, t+s) - \phi(x, t) - \phi(x, s), \qquad \text{and} \qquad \phi(x,0)=0.
$$
\end{lemma}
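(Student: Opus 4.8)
The plan is to reduce to the case of a \emph{smooth} family using Lemma~\ref{diff-mult}, and then trivialize the resulting smooth multiplier by Bargmann's differentiation-and-integration argument, carrying the parameter $x$ along at each step. First I would apply Lemma~\ref{diff-mult} to $\{\zeta^x\}$, obtaining a smooth family $\{\tilde\zeta^x\}$ of bounded additive multipliers on a smaller interval $0\in\caln''\subseteq\caln$ together with a continuous function $\psi:\fkX\times(\caln''+\caln'')\to\rr$ with $\psi(x,0)=0$ such that $\tilde\zeta^x(t,s)=\zeta^x(t,s)-\psi(x,t+s)+\psi(x,t)+\psi(x,s)$. Since the class of families admitting the desired representation is stable under adding a coboundary $(x,t,s)\mapsto\phi_0(x,t+s)-\phi_0(x,t)-\phi_0(x,s)$, it suffices to produce a continuous $\Phi$ trivializing $\tilde\zeta^x$ and then set $\phi=\Phi+\psi$. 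This reduction is essential: a merely continuous multiplier need not be differentiable, so the cocycle relation cannot be differentiated directly.

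Next I would exploit the smoothness. Writing the additive multiplier identity \eqref{mult-prop2} for $\tilde\zeta^x$, namely $\tilde\zeta^x(t,s)+\tilde\zeta^x(t+s,r)=\tilde\zeta^x(s,r)+\tilde\zeta^x(t,s+r)$, and differentiating in $r$ at $r=0$ gives $\partial_s\tilde\zeta^x(t,s)=g^x(t+s)-g^x(s)$, where $g^x(u):=\partial_s\tilde\zeta^x(u,0)$. Using $\tilde\zeta^x(t,0)=0$ from \eqref{mult-prop1} and integrating in the variable $s$, I obtain $\tilde\zeta^x(t,s)=\Phi(x,t+s)-\Phi(x,t)-\Phi(x,s)$ with $\Phi(x,t):=\int_0^t g^x(\tau)\,d\tau$, and clearly $\Phi(x,0)=0$. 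Here the interval $\caln'$ should be chosen small enough that $t+\tau\in\caln''$ for all $t,s\in\caln'$ and $\tau\in[0,s]$, so that every sum occurring in the computation stays in the domain.

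Finally I would verify continuity of $\Phi$ in $(x,t)$. Because $\{\tilde\zeta^x\}$ is a smooth family, the map $(x,\tau)\mapsto g^x(\tau)=\partial_s\tilde\zeta^x(\tau,0)$ is jointly continuous; integrating over the compact interval $[0,t]$ and using compactness of $\fkX$ then yields joint continuity of $\Phi$, hence of $\phi=\Phi+\psi$, establishing the representation $\zeta^x(t,s)=\phi(x,t+s)-\phi(x,t)-\phi(x,s)$ on $\caln'$ with $\phi(x,0)=0$. I expect the main obstacle to be the bookkeeping of joint continuity in the parameter $x$---in particular, ensuring that the differentiation at $r=0$ and the subsequent integration both preserve continuity---which is precisely why the reduction to a smooth family, where $g^x(\tau)$ is jointly continuous, is the decisive step; the nested shrinking of the intervals needed to keep all sums in range is a routine but necessary companion.
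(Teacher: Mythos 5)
Your proposal is correct and follows essentially the same route as the paper: reduce to a smooth family via Lemma~\ref{diff-mult}, differentiate the multiplier identity \eqref{mult-prop2} in $r$ at $r=0$, and integrate the resulting function $\theta(x,u,0)$ to produce $\phi$, with joint continuity coming from smoothness of the family. The only cosmetic difference is that you integrate $\partial_s\tilde\zeta^x$ to derive the coboundary formula while the paper computes $\phi(x,u+v)-\phi(x,u)-\phi(x,v)$ and shows it equals $\zeta^x(u,v)$; these are the same computation read in opposite directions.
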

\begin{proof}
By the previous lemma, it suffices to prove the statement in the case that $\{ \zeta^x : x\in\fkX\}$ is a smooth family on $\caln$. Let us consider
$$
\theta(x, t, s) =\partial_s \zeta^x(t,s)
$$
which is a continuous function of $(x, t, s)$  since the family is smooth. 
Let $0 \in \caln'$ be an open interval  such that $\caln' + \caln' \subseteq \caln$. By differentiating the additive identity for $\zeta^x$ corresponding to multiplier identity \eqref{mult-prop2} with respect to $r$ at $r=0$, we obtain for all $x\in\fkX$ and $t,s \in \caln'$,
\begin{equation}\label{switch1}
\theta(x, t+s,0)  = \theta(x,s,0) + \theta(x, t, s).
\end{equation}
Define $\phi: \fkX \times \caln \to \rr$ by
$$
\phi(x,t) = \int_0^t \theta(x, u, 0) du  
$$
It is clear that $\phi$ is continuous and $\phi(x,0)=0$ for all $x\in\fkX$. Moreover, 
for all $u,v \in\caln'$,
\begin{align*}
\phi(x,u+v) - \phi(x,u) - \phi(x,v) 
& = \int_u^{u+v} \theta(x,r,0)dr - \int_0^v \theta(x,s,0) ds \\
& = \int_0^{v} \theta(x,u +s,0)du - \int_0^s \theta(x,s,0) ds \\
\text{by \eqref{switch1}}& = \int_0^{v} \theta(x,u,s) ds  \\
&= \int_0^{v} \partial_s\zeta^x(u,s) ds  \\
& = \zeta^x(u,v) - \zeta^x(u,0) \\
\text{ by \eqref{mult-prop1}}&= \zeta^x(u,v). 
\end{align*}
\end{proof}

We now obtain a version of Wigner's theorem with a parameter.

\begin{theorem}\label{wigner-parameter}
Let $\fkX$ be a compact Hausdorff space and let $\{ \alpha^x : x\in\fkX\}$ be a family of $*$-automorphisms of $\bh$ such that for all $A\in\bh$, the map $(x,t) \mapsto \alpha_t^x(A)$ is $\sigma$-weakly continuous. Then there exists a family of unitaries $\{U^x_t : t \in \rr, x\in \fkX \}$ in $\bh$ such that
\begin{enumerate}
\item $(x,t) \mapsto U_t^x$ is strongly continuous
\item for each $x \in\fkX$, $\{ U_t^x: t\in \rr\}$ is a unitary group
\item $\alpha_t^x(A) = U^x_t A {U_t^x}^*,$ for all  $A \in \bh$, $t\in\rr$ and  $x\in\fkX$.
\end{enumerate}
In particular, the family $\{ \alpha^x : x\in\fkX\}$ is  trivializable.
\end{theorem}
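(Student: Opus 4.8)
The plan is to start from the local picture furnished by Lemma~\ref{get-unitaries-and-multiplier} and use the three preliminary lemmas on multipliers to straighten out the local cocycle defect, then globalize from a small interval $\caln$ to all of $\rr$ via the group law. Concretely, Lemma~\ref{get-unitaries-and-multiplier} gives us, on some open interval $0\in\caln\subseteq\rr$, a jointly strongly continuous family of unitaries $\{U_t^x\}$ implementing $\alpha_t^x$ together with a continuous family of bounded additive multipliers $\sigma^x=-i\Log\omega^x$ satisfying $U_t^xU_s^x=\omega^x(t,s)U_{t+s}^x$. Applying Lemma~\ref{diff-mult} and then Lemma~\ref{trivial-mult} (possibly shrinking $\caln$ twice), we obtain an open interval $\caln'$ and a continuous $\phi:\fkX\times\caln'\to\rr$ with $\phi(x,0)=0$ such that $\sigma^x(t,s)=\phi(x,t+s)-\phi(x,t)-\phi(x,s)$ for $t,s,t+s\in\caln'$.

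The point of trivializing the multiplier is that I can then correct the unitaries by a scalar phase: set $\widetilde U_t^x = e^{-i\phi(x,t)}U_t^x$ for $t\in\caln'$. Since the scalar correction commutes with everything, $\widetilde U_t^x$ still implements $\alpha_t^x$ and is still jointly strongly continuous in $(x,t)$ (using continuity of $\phi$), and a direct computation shows
\begin{align*}
\widetilde U_t^x\,\widetilde U_s^x
&= e^{-i\phi(x,t)}e^{-i\phi(x,s)}\,\omega^x(t,s)\,U_{t+s}^x\\
&= e^{-i(\phi(x,t)+\phi(x,s))}\,e^{i\sigma^x(t,s)}\,U_{t+s}^x
 = e^{-i\phi(x,t+s)}U_{t+s}^x = \widetilde U_{t+s}^x
\end{align*}
for $t,s,t+s\in\caln'$. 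Thus on $\caln'$ the corrected family is genuinely a \emph{local} one-parameter group in $t$ for each fixed $x$, implementing $\alpha^x$, and jointly continuous.

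The final step is to extend $\widetilde U_t^x$ from $\caln'$ to all of $t\in\rr$ preserving all three properties. Fix $s_0>0$ with $s_0,2s_0\in\caln'$ and use the local group law to define $\widetilde U_t^x$ for arbitrary $t$ by writing $t=ns_0+r$ with $n\in\zz$ and $r$ in a fixed small subinterval and setting $\widetilde U_t^x=(\widetilde U_{s_0}^x)^n\,\widetilde U_r^x$; the local group relation guarantees this is independent of the representation and agrees with the original on $\caln'$, yields a global one-parameter group for each $x$, and remains jointly strongly continuous in $(x,t)$ because the defining products are finite and each factor is continuous. The extended unitaries still implement $\alpha_t^x$ since $\alpha^x$ is itself a one-parameter group of automorphisms and implementation is multiplicative. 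This establishes (1)--(3), and trivializability follows: the family $U^x$ of $\beta$-cocycles intertwining $\alpha^x$ with the constant family $\beta=\id$ (in the notation of the automorphism-group setting) depends continuously on $x$, so $\{\alpha^x\}$ is equivalent to a constant family.

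I expect the main obstacle to be the globalization step: one must check carefully that the inductive extension is both well-defined (independent of the choice of $n$ and $r$, which is exactly where the local group law on $\caln'$ is used) and jointly continuous across the "seams" at integer multiples of $s_0$, where two different representations of nearby $t$ must be reconciled using the cocycle identity. The multiplier-trivialization via Lemmas~\ref{diff-mult} and~\ref{trivial-mult} does the genuinely hard analytic work of removing the phase obstruction, so once the multiplier is a coboundary the remaining arguments are structural; nevertheless the bookkeeping to verify continuity of the $\rr$-valued correction $\phi$ propagates correctly through the exponentials and products deserves attention.
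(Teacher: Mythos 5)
Your proposal is correct and follows essentially the same route as the paper: Lemma~\ref{get-unitaries-and-multiplier} to produce the local implementing unitaries and multiplier, Lemmas~\ref{diff-mult} and~\ref{trivial-mult} to write the additive multiplier as a coboundary, the scalar phase correction $e^{-i\phi(x,t)}$ to get a local one-parameter group, and extension to all of $\rr$ via the local group law (the paper defines $U^x_t=(U^x_{t/n})^n$ for large $n$ rather than your division-with-remainder scheme, but the well-definedness check --- products over $\caln'$ with equal sums agree --- is the same). The only blemish is a sign slip in your displayed computation (you use $\omega^x=e^{i\sigma^x}$ together with $\sigma^x(t,s)=\phi(x,t+s)-\phi(x,t)-\phi(x,s)$, which does not cancel as written; replacing $\phi$ by $-\phi$ fixes it), a harmless inconsistency that the paper itself shares between its lemma and its theorem proof.
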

\begin{proof} By Lemma~\ref{get-unitaries-and-multiplier}, there exists an open interval $0\in\caln$ and a family $\{V_t^x : t\in \caln, x\in\fkX\}$ implementing the family $\{\alpha^x : x\in\fkX\}$, and a continuous family of additive multipliers $\{\sigma^x : x\in\fkX\}$ on $\caln$ such that
$$
V_t^x V_s^x = \exp(-i\sigma^x(t,s)) V^x_{t+s},
$$
for all $t,s\in\caln$ such that $t+s \in\caln$ and $x\in\fkX$. Furthermore, by Lemmas~\ref{diff-mult} and \ref{trivial-mult}, perhaps by shrinking $\caln$, there exists a continuous function $\phi:\fkX\times \caln \to \rr$ and an open interval $0\in \caln' \subseteq \caln$ such that for all $x\in\fkX$ and $t,s\in\caln'$,
$$
\sigma^x(t,s) =  \phi(x, t+s) - \phi(x, t) - \phi(x, s), \qquad \text{and} \qquad \phi(x,0)=0.
$$
In particular, if we define $U_t^x= e^{-i\phi(x,t)}V_t^x$ for $x\in\fkX$ and $t\in\caln$ we obtain a new family of unitary operators which is strongly continuous in $(x,t)$,  which implements the automorphisms $\{\alpha^x : x\in\fkX\}$ for $t\in \caln$, and which furthermore satisfies
$$
U_t^x U_s^x = U_{t+s}^x, \qquad  \forall t,s \in\caln', \forall x\in\fkX.
$$
It is clear that we now wish to define for all $t\in \rr$, $U_t^x = (U_{t/n}^x)^n$ where $n\in\nn$ is large enough so that $t/n \in\caln'$. This gives rise to a well-defined family of operators, because for all $x\in\fkX$ and  $t_1, \dots t_m, s_1,\dots s_n \in \caln'$ such that $\sum_{j=1}^n t_i = \sum_{k=1}^m s_m$, it is easy to see that
$$
U^x_{t_1} U^x_{t_2} \dots U^x_{t_n} = U^x_{s_1}U^x_{s_2}\dots U^x_{s_m}.
$$
The strong continuity of the family $\{U_t^x\}$ follows from the continuity of $\phi$ and the family $V_t^x$ on $(x,t)$.
\end{proof}

Despite the foregoing result, by itself the notion of point-$\sigma$-weak continuity for a family of E$_0$-semigroups $\{ \alpha^x : x \in \fkX \}$ of $\bh$ over a compact Hausdoff space $\fkX$ is not sufficiently stringent for applications. Indeed, cocycle-conjugacy classes need not be preserved under continuous parametrizations in this sense. For an example of this phenomenon, let $\beta$ be any E$_0$-semigroup acting on $\bh$ which is not cocycle conjugate to an automorphism group, and consider the family 
$\alpha = \{ \alpha^x : x \in [0,1] \}$ where
$$
\alpha^x_t(A) := \beta_{xt}(A), \qquad \forall A \in \bh, t \geq 0.
$$
It is clear that $\alpha^x$ is an E$_0$-semigroup for every $x \in [0,1]$, and for 
every $A\in\bh$ the map $(x,t) \mapsto \alpha^x_t(A)$ is $\sigma$-weakly continuous. 
Notice however that $\alpha^0_t(x)=x$ for all $t>0$ and $x\in\bh$. 

In this context, it seems that the appropriate reading of Theorem~\ref{wigner-parameter} is that it provides a lift of a continuous family of E$_0$-semigroups which are \emph{in the same cocycle-cojugacy class}.

\begin{remark}\label{two-toplogies}
We note that it follows immediately from Theorem \ref{wigner-parameter} that any $\sigma$-weakly continuous family of automorphism groups is a continuous family in our sense.  
\end{remark}

\section{Continuous families of E$_0$-semigroups}

We first recall the following theorem concerning invariant metrics. For a proof, see Theorem~8.3, p.70 of \cite{hewitt-ross1}.

\begin{theorem}[Birkhoff-Kakutani]
Any metrizable topological group admits a left-invariant metric. 
\end{theorem}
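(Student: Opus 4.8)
The plan is to produce the left-invariant metric in the standard way: first manufacture a symmetric, subadditive ``gauge'' $\rho\colon G\to[0,\infty)$ on the group $G$ (with identity $e$) that vanishes exactly at $e$, and then set $d(x,y)=\rho(x^{-1}y)$. Left-invariance is then automatic, since $d(gx,gy)=\rho((gx)^{-1}(gy))=\rho(x^{-1}y)=d(x,y)$; the triangle inequality follows from subadditivity $\rho(xy)\le\rho(x)+\rho(y)$; and symmetry follows from $\rho(x^{-1})=\rho(x)$. Since $G$ is metrizable it is Hausdorff and first countable, so I would first extract a countable neighborhood base $\{W_n\}$ at $e$ and then recursively build symmetric open neighborhoods $U_n$ of $e$ (with $U_0=G$) satisfying $U_{n+1}=U_{n+1}^{-1}$, $U_{n+1}^3\subseteq U_n$, and $U_n\subseteq W_n$. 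Such a choice is possible by continuity of multiplication and inversion together with the fact that $\{W_n\}$ is a base. By Hausdorffness $\bigcap_n U_n=\{e\}$, and $\{U_n\}$ is again a neighborhood base at $e$.

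Next I would define $g(x)=\inf\{2^{-n}:x\in U_n\}$, so that $g(x)\le 2^{-n}$ if and only if $x\in U_n$; since $g$ takes only the values $\{2^{-n}\}\cup\{0\}$, the strict inequality $g(x)<2^{-n}$ is equivalent to $x\in U_{n+1}$. The function $g$ is symmetric and satisfies $g(e)=0$ and $g(x)=0\iff x=e$, but it need not be subadditive. To repair this I would set
\[
\rho(x)=\inf\Big\{\textstyle\sum_{i=1}^k g(x_i):k\ge1,\ x_1x_2\cdots x_k=x\Big\},
\]
which is subadditive and symmetric by construction, satisfies $\rho(e)=0$, and obeys $\rho\le g$ via the trivial factorization.

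The heart of the matter, and the step I expect to be the main obstacle, is the reverse comparison $g\le 2\rho$, i.e.\ the chain estimate
\[
g(x_1x_2\cdots x_k)\le 2\sum_{i=1}^k g(x_i)\qquad\text{for all }k\ge1.
\]
I would prove this by strong induction on $k$, the case $k=1$ being trivial. Writing $S=\sum_i g(x_i)$, the case $S\ge\tfrac12$ is immediate since $g\le1$; otherwise choose $n\ge1$ with $2^{-(n+1)}\le S<2^{-n}$, split the product at the largest index $p$ for which $\sum_{i\le p}g(x_i)\le S/2$, and apply the inductive hypothesis to the shorter products $A=x_1\cdots x_p$ and $B=x_{p+2}\cdots x_k$ to obtain $g(A),g(B)<2^{-n}$, while $g(x_{p+1})\le S<2^{-n}$. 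The strict inequalities place $A$, $x_{p+1}$, and $B$ in $U_{n+1}$, so their product lies in $U_{n+1}^3\subseteq U_n$, giving $g(x_1\cdots x_k)\le 2^{-n}\le 2S$ as required. This yields $\tfrac12 g\le\rho\le g$, whence $\rho(x)=0\iff x=e$, so that $d(x,y)=\rho(x^{-1}y)$ is a genuine left-invariant metric. Finally, because $\tfrac12 g\le\rho\le g$ and the sublevel sets of $g$ are exactly the base $\{U_n\}$, the $\rho$-balls about $e$ form a neighborhood base at $e$; combined with left-invariance this shows that $d$ induces the original topology, completing the proof.
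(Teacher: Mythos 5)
Your argument is correct and complete: the recursive choice of symmetric neighborhoods with $U_{n+1}^3\subseteq U_n$, the gauge $g$, the chain-infimum $\rho$, and the factor-of-two comparison $g\le 2\rho$ (proved by splitting a chain at the last index whose partial sum is at most $S/2$) together yield a left-invariant metric generating the original topology; the only step left implicit is the trivial case $\sum_i g(x_i)=0$ of the induction, where each $x_i=e$ because $\bigcap_n U_n=\{e\}$. The paper, however, gives no proof of this statement at all: Birkhoff--Kakutani is recalled as a black box, with a citation to Theorem~8.3, p.~70 of \cite{hewitt-ross1}, whose proof is essentially the construction you carried out. What the paper does supply, and your general machinery does not, is an explicit formula for the one group it actually needs, $C(\rr_+,\uh)$ with the compact-open topology:
\[
d(U,V)=\sum_{k=1}^{\infty}\frac{1}{2^k}\sup_{t\in[0,k]}\|U_t\psi_k-V_t\psi_k\|,
\]
where $(\psi_k)$ is a dense sequence in the unit sphere of $\h$; left-invariance here is immediate from unitarity of each $W_t$, with no neighborhood combinatorics required. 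So your proof establishes the general theorem that the paper merely quotes, while the paper trades generality for a concrete metric that is transparent in its intended use (the properness argument of Proposition~\ref{proper-G-space-lemma}); since, as the paper remarks, the exact form of the metric is of no importance in the sequel, either route serves the application equally well.
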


The Polish group of interest to us here is the group $C(\rr_+,\uh)$. We note  that we can write such a metric explicitly as follows. Let $(\psi_k)_{k\in \nn}$ be a dense sequence in the unit sphere of $\h$. For any two elements $U,V \in C(\rr_+,\uh)$, define 
$$
d(U,V) = \sum_{k=1}^{\infty} \frac{1}{2^k} \sup_{t \in [0,k]} \|U_t\psi_k - V_t\psi_k\|
$$
The exact form of the metric is of no importance in the sequel, however. 

\begin{prop} \label{proper-G-space-lemma}
Let $H$ be a Polish group, and let 
$G$ be a  subgroup of $H$. If $X$ is any subset of $H$ which is $G$-invariant, then
$G$ acts freely and continuously on $X$. If in addition $G$ is locally compact (in the relative topology), then $X$ is a proper (right) $G$-space and $(X, \pi, X/G)$ is a principal $G$-bundle. 
\end{prop}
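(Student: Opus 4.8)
The plan is to treat the three assertions in turn for the right action $x\cdot g = xg$ (which is well defined because $X$ is $G$-invariant), reducing everything to continuity of multiplication and inversion in the topological group $H$ together with a single use of local compactness of $G$. First, freeness and continuity are immediate: the action map $X\times G\to X$ is the restriction of the multiplication $H\times H\to H$, hence continuous, and $xg=x$ forces $g=e$ by cancellation in $H$, so the action is free. For the principal-bundle assertion, I would observe that for $(x,y)\in R$ the unique $g\in G$ with $xg=y$ is $g=x^{-1}y$, so $\Delta(x,y)=x^{-1}y$ is the restriction to $R$ of the continuous map $(a,b)\mapsto a^{-1}b$ on $H\times H$; since it takes values in $G$, it is continuous as a map into $G$, which is precisely the definition of a principal $G$-bundle. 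Both of these points are in fact independent of local compactness. Finally, $X$ is metrizable as a subspace of the Polish group $H$, hence completely regular, so the ambient hypothesis in the definition of a proper $G$-space is satisfied.

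The only substantial point is that $X$ is a proper $G$-space, and the plan is to prove first that $H$ itself is a proper $G$-space and then restrict. Since the left translations $L_h(k)=hk$ are $G$-equivariant homeomorphisms of $H$ (indeed $L_h(k\cdot g)=L_h(k)\cdot g$), it suffices by homogeneity to produce a single \emph{small} neighborhood of the identity $e$. Using local compactness of $G$, I would choose a compact neighborhood $K$ of $e$ in $G$ and an open $W_0\ni e$ in $H$ with $G\cap W_0\subseteq K$; then by continuity of multiplication choose symmetric open neighborhoods $W,W'$ of $e$ in $H$ with $WW'W\subseteq W_0$, and set $U_e=W$. Given $y\in H$, continuity of $(a,b)\mapsto ab^{-1}$ lets me pick an open $V_y\ni y$ with $V_yV_y^{-1}\subseteq W'$.

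The key computation, and the crux of the whole proposition, is the following. If $g_1,g_2$ both lie in $S=\{g\in G: W\cap V_yg\neq\varnothing\}$, write $g_i=v_i^{-1}w_i$ with $v_i\in V_y$, $w_i\in W$; the point is that the $y$-dependence cancels in the ratio,
$$
g_1^{-1}g_2 = w_1^{-1}(v_1v_2^{-1})w_2 \in WW'W \subseteq W_0,
$$
so $g_1^{-1}g_2\in G\cap W_0\subseteq K$. Fixing any $g_*\in S$, every $g\in S$ then satisfies $g_*^{-1}g\in K$, whence $S\subseteq g_*K$, which has compact closure in $G$ (a closed subset of the compact set $g_*K$); if $S=\varnothing$ this is trivial. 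This shows $U_e=W$ is small, and homogeneity upgrades it to a small neighborhood $xW$ of every $x\in H$, so $H$ is a proper $G$-space. Properness then passes to the invariant subspace $X$: for $x,y\in X$ I would take $U_x\cap X$ and $V_y\cap X$ and note that $\{g\in G:(U_x\cap X)\cap (V_y\cap X)g\neq\varnothing\}$ is contained in the corresponding set for $H$, hence has compact closure.

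I expect the main obstacle to be exactly this properness step, and in particular the temptation to bound $S$ by a metric ball: the left-invariant metric on $H$ just recalled gives $d(e,g)=d(v,w)$ for $g=v^{-1}w$, placing $S$ inside a $d$-ball of $G$, but balls in a locally compact group need not be relatively compact (e.g. $\mathbb{Z}$ carries a bounded left-invariant metric), so that estimate is worthless. The correct move is the conjugation-free cancellation above, which confines $S$ to a single left translate $g_*K$ of the compact neighborhood $K$. Getting the handedness right is the one delicate point: one must estimate $g_1^{-1}g_2$ rather than $g_1g_2^{-1}$, and reduce to the basepoint $e$ via equivariant left translation rather than working at a general $x$, where the analogous computation produces a spurious conjugation $x^{-1}(v_1v_2^{-1})x$ that is not controlled.
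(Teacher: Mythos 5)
Your proof is correct, and its properness step takes a genuinely different (metric-free) route from the paper. The paper's proof runs entirely through the Birkhoff--Kakutani left-invariant metric $d$ on $H$, which it recalls immediately before the proposition for exactly this purpose: local compactness of $G$ gives an $r_0>0$ such that every subset of $G$ of $d$-diameter at most $4r_0$ has compact closure in $G$; then, taking $U_x=B_r(x)\cap X$ with $r<r_0$ and $V_y$ a neighborhood of $y$ with $d(z^{-1}x,y^{-1}x)<r$ for all $z\in V_y$, left-invariance yields $\{g\in G: U_x\cap V_y\cdot g\neq\varnothing\}\subseteq B_{2r}(y^{-1}x)\cap G$, a set of diameter at most $4r$, hence relatively compact. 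Your argument replaces the metric by neighborhood algebra at the identity ($G\cap W_0\subseteq K$, $WW'W\subseteq W_0$, $V_yV_y^{-1}\subseteq W'$), then uses equivariance of left translations to propagate the small neighborhood of $e$ to all of $H$, and restricts from $H$ to the invariant subset $X$. The underlying mechanism is the same in both proofs: the transporter set is trapped in a single left translate $g_*K$ of a compact neighborhood $K$ of $e$ in $G$. What your version buys is independence of metrizability -- it works verbatim for any Hausdorff topological group $H$ with locally compact subgroup $G$ -- while the paper's version buys brevity, since for Polish $H$ the invariant metric is available and does the bookkeeping in one line.

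One correction to your closing paragraph: you dismiss the metric-ball estimate as ``worthless,'' but that estimate, implemented with a radius fixed small in advance, is precisely the paper's proof, and it is sound. Once $4r$ is below a threshold $\epsilon$ with $B_\epsilon(e)\cap G\subseteq K$, left-invariance shows that any subset of $G$ of diameter at most $4r$ containing a point $g_*$ lies in $g_*K$ -- the very cancellation you perform with the sets $W$, $W'$. Your $\mathbb{Z}$ example only shows that balls of \emph{uncontrolled} radius need not be relatively compact; for small radius, relative compactness holds uniformly over all centers (even centers in $H\setminus G$, via the diameter formulation) exactly because the metric is left-invariant. So the ``temptation'' you warn against is not a trap the paper falls into, but the paper's actual, correct argument.
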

\begin{proof}
It is clear that $G$ acts continuously and freely on $H$, hence it acts continuously and freely on $X$. Now suppose that it is locally compact.

Let $d$ be a left invariant metric on $H$. We denote by $B_{r}(h)$ the ball with center $h$ and radius $r$ in $H$. Given $x \in X$, we denote $B_r^X(x) = B_r(x) \cap X$. Since $G$ is locally compact, it follows that there is some $r_0>0$ such that for any $0<r<r_0$, any set of diameter $4r$ in $G$ has compact closure in $G$, and in particular $B_{2r}(h) \cap G$ has compact closure in $G$ for any $h \in H$. Fix such an $0<r<r_0$.

We prove that $X$ is a proper $G$-space by showing that for every $x \in X$ the neighborhood $B_r^X(x)$ is small. Fix $y \in X$, and let $V_y$ be an open neighborhood of $y$ in $X$ such that for all $z \in V_y$, $d(z^{-1}x, y^{-1}x) < r$. This is possible by continuity of the operations in $H$. We now claim that the set 
$$
A_{x,y} = \{g \in G : B_r^X(x) \cap V_y \cdot g \neq \varnothing\} 
$$
has a compact closure in $G$. To see that, we note that $g \in A_{x,y}$ if and only if there exists $z\in V_y$ such that $zg \in B_r^X(x)$, so $d(g, z^{-1}x) = d(zg,x) < r$ by left-invariance. Thus, $d(g,y^{-1}x)<2r$, so $A_{x,y} \subseteq B_{2r}(y^{-1}x)\cap G$, and therefore has compact closure in $G$. 

Since the action of $G$ on $X$ is free, in order to prove that $X$ is a principal $G$-bundle it remains to show that the function $\Delta$ uniquely defined on the equivalence relation $R= \{ (x,y) \in X \times X : \exists g \in G, y=xg \}$ by
$$
x \Delta(x,y) = y, \qquad \forall (x,y) \in R
$$
is continuous. But this is evident since $\Delta(x,y)=x^{-1}y$. 
\end{proof}

We obtain the following immediate corollary by taking $H = C(\rr_+,\uh)$, $X = C_\beta$ and $G = G_\beta$.

\begin{cor} \label{cor:proper-bundle}
If $\beta$ is an E$_0$-semigroup acting on $\bh$, then its gauge group $\gauge_\beta$ acts freely on the cocycle space $C_\beta$ via right multiplication: for all $U \in C_\beta$ and $Q \in \gauge_\beta$, $(U \cdot Q)_t := U_t Q_t$, $t\geq 0.$
Furthermore, if $G_\beta$ is locally compact, then $C_\beta$ is a proper $G_\beta$-space and $\Xi_\beta=(C_\beta, \pi, \cale_\beta)$ is a principal $G_\beta$-bundle, where $\pi:C_\beta \to \cale_\beta$ denotes the natural quotient map onto $\cale_\beta$ endowed with the quotient topology. 
\end{cor}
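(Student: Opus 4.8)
The plan is to deduce the statement directly from Proposition~\ref{proper-G-space-lemma} under the identifications $H = C(\rr_+,\uh)$, $G = G_\beta$ and $X = C_\beta$, and then to reconcile the abstract quotient $C_\beta/G_\beta$ furnished by that proposition with the space $\cale_\beta$ named in the statement. Since the introductory discussion has already established every hypothesis, the corollary is essentially an assembly of facts rather than a new argument.

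First I would verify the three hypotheses of Proposition~\ref{proper-G-space-lemma}. The group $H = C(\rr_+,\uh)$ is Polish, as recorded earlier (using that $\uh$ is Polish together with Theorem~4.19 of \cite{kechris-descriptive}); the gauge group $G_\beta$ is a closed subgroup of $H$, hence in particular a subgroup; and $C_\beta \subseteq H$ is invariant under the right action $(U\cdot Q)_t = U_t Q_t$ of $G_\beta$, as was observed when the action was introduced. With these in place, the first assertion of Proposition~\ref{proper-G-space-lemma} immediately yields that $G_\beta$ acts freely and continuously on $C_\beta$, which is the first claim of the corollary and requires no further hypothesis on $G_\beta$. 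If moreover $G_\beta$ is locally compact in its relative topology, the second assertion of the proposition gives at once that $C_\beta$ is a proper $G_\beta$-space and that $(C_\beta, \pi, C_\beta/G_\beta)$ is a principal $G_\beta$-bundle.

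The one point requiring attention is the identification of the base $C_\beta/G_\beta$ with $\cale_\beta$, so that $\pi$ becomes exactly the map described in the statement. Here I would invoke the fact, already noted, that the fibers of $\pi\colon C_\beta \to \cale_\beta$, $\pi(U) = \alpha$ with $\alpha_t(A) = U_t\beta_t(A)U_t^*$, coincide precisely with the $G_\beta$-orbits: in the easy direction, if $V = U\cdot Q$ with $Q \in G_\beta$, then since each $Q_t$ commutes with $\beta_t(\bh)$ one has $V_t \beta_t(A) V_t^* = U_t Q_t \beta_t(A) Q_t^* U_t^* = U_t\beta_t(A)U_t^*$, so $\pi(V) = \pi(U)$; conversely, if $\pi(U)=\pi(V)$ then $Q_t := U_t^* V_t$ is seen to be local and to satisfy the cocycle identity, whence $Q \in G_\beta$ and $V = U\cdot Q$. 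Consequently the orbit equivalence relation of the $G_\beta$-action is exactly the fiber relation of $\pi$. Since $\cale_\beta$ carries, by definition, the quotient topology of $C_\beta/G_\beta$ and $\pi$ is the corresponding quotient map, the two are literally identified, and the principal $G_\beta$-bundle $(C_\beta, \pi, C_\beta/G_\beta)$ is precisely $\Xi_\beta = (C_\beta, \pi, \cale_\beta)$. I do not expect a serious obstacle: the only substantive verification is that $U_t^*V_t$ is a genuine $\beta$-cocycle rather than merely a pointwise product, which hinges on the locality of the factors, and this was already flagged as routine in the preliminaries.
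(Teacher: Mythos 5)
Your proposal is correct and matches the paper's own treatment: the corollary is obtained exactly by applying Proposition~\ref{proper-G-space-lemma} with $H = C(\rr_+,\uh)$, $X = C_\beta$, $G = G_\beta$, with the identification of $C_\beta/G_\beta$ with $\cale_\beta$ resting on the fact (recorded in the paper's introduction) that the fibers of $\pi$ are precisely the $G_\beta$-orbits. Your explicit verification that $Q_t = U_t^*V_t$ is a local cocycle is the ``one readily checks'' step the paper leaves to the reader, and it is carried out correctly.
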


\begin{prop}\label{contractive}
If $\beta$ is an E$_0$-semigroup acting on $\bh$, then $C_\beta$ is contractible.
\end{prop}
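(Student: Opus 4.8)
The plan is to exhibit an explicit contraction $H\colon [0,1]\times C_\beta\to C_\beta$ with $H(1,U)=U$ and $H(0,U)=\mathbf 1$ for every $U\in C_\beta$, where $\mathbf 1$ denotes the trivial cocycle $(\mathbf 1)_t=1$. The underlying idea is to deform $U$ to $\mathbf 1$ by ``slowing it down''. It is worth noting first that the obvious truncation homotopies are ruled out: if a $\beta$-cocycle $V$ satisfies $V_t=1$ for all $t$ in some interval $[0,\epsilon]$, then an easy induction using $V_t=V_\epsilon\beta_\epsilon(V_{t-\epsilon})$ shows that $V\equiv 1$. Hence one cannot contract $U$ by making it trivial on a growing initial (or terminal) segment; the contraction must shrink $U$ ``uniformly in time''. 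For the trivial semigroup $\beta=\id$ this is literally the time-rescaling $U\mapsto(t\mapsto U_{\lambda t})$, and for general $\beta$ I want a version of this that remains inside $C_\beta$.

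The basic building block is that $\beta$ acts on $C_\beta$: for each $s\ge 0$ the family $R_s(U)_t:=\beta_s(U_t)$ is again a $\beta$-cocycle, since $\beta_s(U_{t+r})=\beta_s(U_t)\,\beta_{s+t}(U_r)=\beta_s(U_t)\,\beta_t(\beta_s(U_r))$ using $\beta_s\beta_t=\beta_t\beta_s=\beta_{s+t}$. More generally, for a fixed mesh $\Delta>0$ and $\lambda\in[0,1]$ I would define, on the grid $\{0,\Delta,2\Delta,\dots\}$, the ordered product
\[
V^{\Delta,\lambda}_{n\Delta}:=\beta_0(U_{\lambda\Delta})\,\beta_\Delta(U_{\lambda\Delta})\,\beta_{2\Delta}(U_{\lambda\Delta})\cdots\beta_{(n-1)\Delta}(U_{\lambda\Delta}),
\]
extended to arbitrary $t$ by appending the natural partial factor $\beta_{n\Delta}(U_{\lambda(t-n\Delta)})$ with $n=\lfloor t/\Delta\rfloor$. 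Because each $\beta_{n\Delta}$ is a $*$-endomorphism, a direct telescoping computation using $\beta_{n\Delta}\beta_{j\Delta}=\beta_{(n+j)\Delta}$ shows that $V^{\Delta,\lambda}_{(n+m)\Delta}=V^{\Delta,\lambda}_{n\Delta}\,\beta_{n\Delta}(V^{\Delta,\lambda}_{m\Delta})$, so the cocycle identity holds exactly on grid points. At $\lambda=1$ the product telescopes to $V^{\Delta,1}_{n\Delta}=U_{n\Delta}$, independently of $\Delta$; at $\lambda=0$ every factor is $1$, so $V^{\Delta,0}\equiv 1$. Thus the family genuinely interpolates between $U$ and $\mathbf 1$.

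I would then define the homotopy as the limit $H(\lambda,U):=\lim_{\Delta\to 0}V^{\Delta,\lambda}$, a time-ordered exponential that ``inserts $\beta$-evolution'' in order to dilute the cocycle by the factor $\lambda$ while remaining a $\beta$-cocycle. Granting the limit, the exact cocycle identity and the endpoint values $H(1,U)=U$ and $H(0,U)=\mathbf 1$ pass to the limit. As a consistency check, for $\beta=\id$ every cocycle is a one-parameter unitary group, $V^{\Delta,\lambda}_{n\Delta}=U_{\lambda\Delta}^{\,n}=U_{\lambda t}$ exactly, and $H(\lambda,U)\to\mathbf 1$ as $\lambda\to 0$ follows from strong continuity of $t\mapsto U_t\psi$ uniformly on compacta.

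The main obstacle is precisely the existence and regularity of this limit: one must show that $V^{\Delta,\lambda}$ converges as $\Delta\to 0$, uniformly for $t$ in compact sets and strongly on vectors, and that the limit depends jointly continuously on $(\lambda,U)$ in the compact–open topology. Since the cocycles here are only strongly continuous (their generators may be unbounded or fail to exist), this is a Trotter–Kato/product-formula statement rather than a routine estimate: the per-step discrepancy between $\beta_{k\Delta}(U_{\lambda\Delta})$ and its refinement is controlled by the strong moduli of continuity of $s\mapsto\beta_s$ and of $r\mapsto U_r$, and the difficulty lies in summing the $\sim t/\Delta$ such discrepancies. I expect this convergence, together with the attendant joint continuity in $(\lambda,U)$, to be the crux of the argument, to be handled by a careful product-formula estimate.
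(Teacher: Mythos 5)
Your proposal is not a proof but a plan, and the step you defer to ``a careful product-formula estimate''---the existence of the limit $H(\lambda,U)=\lim_{\Delta\to 0}V^{\Delta,\lambda}$---is not merely technical: it fails in general. To see this concretely, specialize to $\beta_t=\mathrm{Ad}(e^{itH})$ with $H$ self-adjoint and unbounded (a legitimate E$_0$-semigroup covered by the proposition). Every $\beta$-cocycle then has the form $U_t=e^{itK}e^{-itH}$ for some self-adjoint $K$, since $t\mapsto U_te^{itH}$ is a strongly continuous unitary semigroup, hence a group of the form $e^{itK}$ by Stone's theorem. Substituting into your ordered product and collapsing adjacent factors gives
\[
V^{\Delta,\lambda}_{n\Delta}
=e^{i\lambda\Delta K}\,\bigl(e^{i(1-\lambda)\Delta H}e^{i\lambda\Delta K}\bigr)^{n-1}\,e^{-i((n-1)+\lambda)\Delta H},
\]
and since the outer factors converge strongly (to $1$ and to $e^{-itH}$, with $t=n\Delta$ fixed), existence of your limit is \emph{equivalent} to strong convergence of the Trotter products $\bigl(e^{i(1-\lambda)\Delta H}e^{i\lambda\Delta K}\bigr)^{\lfloor t/\Delta\rfloor}$. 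This is precisely the unitary Trotter product formula problem: convergence holds when $(1-\lambda)H+\lambda K$ is essentially self-adjoint on $D(H)\cap D(K)$ (Trotter's theorem), but in general it does not---by a classical construction of von Neumann there exist self-adjoint $H,K$ with $D(H)\cap D(K)=\{0\}$, so the would-be generator is not even densely defined, and Chernoff exhibited unitary Trotter products that fail to converge strongly. Since cocycles are only strongly continuous and need not have generators in any sense, your homotopy is simply not defined on all of $C_\beta$, and no refinement of the estimates can change that. (Your preliminary observations are correct---truncation homotopies are indeed impossible, the grid cocycle identity holds, and the scheme works for $\beta=\id$---but they do not help here.)

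The paper's proof sidesteps generators and limits entirely by an ``extra room'' argument. Since $\beta$ is cocycle conjugate to $\tbeta=\beta\otimes\id$ acting on $\B(\h\otimes L^2(\rr_+))$, and cocycle conjugacy induces a homeomorphism of cocycle spaces, it suffices to contract $\Ctb$. Let $S_\lambda=1\otimes V_\lambda$ with $V_\lambda$ the right shift on $L^2(\rr_+)$, and $P_\lambda=1-S_\lambda S_\lambda^*$; the key point is that $\tbeta_t$ fixes $S_\lambda$ and $P_\lambda$, so
\[
F(\lambda,U)_t=P_\lambda+S_\lambda U_t S_\lambda^*
\]
is again a unitary $\tbeta$-cocycle, $F(0,U)=U$, $F$ is continuous, and $F(\lambda,U)\to 1$ in the compact-open topology as $\lambda\to\infty$, so $F$ extends to a contraction parametrized by $[0,\infty]$. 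This is purely algebraic and topological---no differentiability of cocycles is ever invoked---which is exactly what your approach is missing. If you wish to salvage your ``dilution'' idea, it works only on the subspace of cocycles with bounded generators, which is far from all of $C_\beta$.
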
 
\begin{proof}
Consider the $E_0$-semigroup $\tbeta$ on $\B(\h \otimes L^2(\rr_+))$
given by $\tbeta = \beta \otimes id_{\B(L^2(R_+))}$. Since $\beta$
is cocycle conjugate to $\tbeta$, it suffices to show that $\Ctb$ is
contractible. Let $\{V_\lambda: \lambda\geq 0 \}$ be the right shift semigroup 
on $L^2(\rr_+)$ and let $S_\lambda = 1_{\bh} \otimes V_\lambda$, $\lambda\geq 0$ 
 be the right shift semigroup on $\h \otimes L^2(\rr_+)$.
Denote $P_\lambda = 1 - S_\lambda S_\lambda^* = 1 \otimes (1 - V_\lambda V_\lambda^*)$ for all $\lambda\geq 0$.  Observe 
that for all $t\geq 0$ and $\lambda\geq 0$,
$\tbeta_t(S_\lambda) = S_\lambda$ and $\tbeta_t(P_\lambda) = P_\lambda$.

Let us define $F: [0,\infty) \times \Ctb \to \Ctb$
by $F(\lambda,U)_t = P_\lambda + S_\lambda U_t S_\lambda^*$. We need to check that 
the range of $F$ is in $\Ctb$ (once that is done, it is clear that 
$F$ is continuous). Given a $\tbeta$-cocycle $U$, we must show
that for all $\lambda\geq 0$, $F(\lambda,U)$ is also a cocycle, i.e. we should
check that $F(\lambda,U)_t \tbeta_t(F(\lambda,U)_s) = F(\lambda,U)_{t+s}$, and that
$F(\lambda,U)$ is unitary. Indeed,
\begin{align*}
F(\lambda,U)_t\; \tbeta_t(F(\lambda,U)_s) & =
(P_\lambda + S_\lambda U_t S_\lambda^*)\tbeta_t \big(P_\lambda + S_\lambda U_s S_\lambda^*\big) \\
& =(P_\lambda + S_\lambda U_t S_\lambda^*) (P_\lambda + S_\lambda \tbeta_t(U_s) S_\lambda^*) \\
& =P_\lambda + S_\lambda U_t  \tbeta_t(U_s) S_\lambda^* 
= P_\lambda + S_\lambda U_{t+s} S_\lambda^* \\
&= F(\lambda,U)_{t+s} 
\end{align*}
thus $F(\lambda,U)$ satisfies the cocycle condition. To check that it is
unitary, we write
$$
F(\lambda,U)_t F(\lambda,U)_t^* = (P_\lambda + S_\lambda U_t S_\lambda^*)(P_\lambda + S_\lambda U_t^* S_\lambda^*)
= P_\lambda + S_\lambda U_t U_t^* S_\lambda^* = 1
$$
and checking that $F(\lambda,U)_t^* F(\lambda,U)_t = 1$ is similar. Now note that for any sequence $(\lambda_n, U_n)$, such that $\lambda_n \to \infty$, we have that $F(\lambda_n,U)$ converges to 1 as $n \to \infty$, hence we have a well-defined continuous extension of $F$ to $[0,\infty]\times \Ctb$ which is a contraction as required.
\end{proof}

This allows us to prove the following theorem. 

\begin{theorem}
Let $\beta$ be an E$_0$-semigroup such that its gauge group $\gauge_\beta$ is a Lie group. Then $\Xi_\beta=(C_\beta, \pi, \cale_\beta)$ is a universal principal $G_\beta$-bundle.
\end{theorem}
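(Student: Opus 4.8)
The plan is to verify the two conditions in the definition of a universal bundle by combining the structural results already established. The key observation is that the machinery is almost entirely in place: Theorem~\ref{thm-dold} reduces universality of a \emph{numerable} principal $G$-bundle to contractibility of the total space, and Proposition~\ref{contractive} already supplies that contractibility for $C_\beta$. So the entire argument reduces to two tasks: (i) confirming that $\Xi_\beta$ is genuinely a principal $G_\beta$-bundle, and (ii) promoting it to a \emph{numerable} one so that Dold's theorem applies.

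For task (i), I would invoke Corollary~\ref{cor:proper-bundle} directly. Since the hypothesis is that $\gauge_\beta$ is a Lie group, it is in particular locally compact, so the corollary guarantees that $C_\beta$ is a proper $G_\beta$-space and that $\Xi_\beta = (C_\beta, \pi, \cale_\beta)$ is a principal $G_\beta$-bundle. This is the step where the Lie group hypothesis does its essential work.

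For task (ii), the issue is that our notion of principal $G$-bundle does not presuppose local triviality, whereas numerability requires a trivializing cover with a subordinate partition of unity. Here I would first apply Theorem~\ref{thm-palais}: the Lie group $G_\beta$ acts freely on $C_\beta$, which is completely regular (being a subspace of the Polish, hence metrizable, group $C(\rr_+,\uh)$), so since $\Xi_\beta$ is a principal $G_\beta$-bundle it is in fact locally trivial. To conclude numerability, one wants the base $\cale_\beta$ to be paracompact, since a locally trivial bundle over a paracompact base is automatically numerable. The base $\cale_\beta = C_\beta/G_\beta$ is completely regular by Proposition~\ref{proper-quotient} (as $C_\beta$ is a proper $G_\beta$-space). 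To upgrade this to paracompactness, I would argue that $\cale_\beta$ is metrizable: a locally trivial bundle with Polish fibers over a completely regular, second-countable or metrizable base should itself have a metrizable base, and metrizable spaces are paracompact. Concretely, one expects $\cale_\beta$ to be second countable since $C_\beta$ is (being a subspace of the Polish space $C(\rr_+,\uh)$) and $\pi$ is an open quotient map; a second-countable completely regular space which is locally metrizable is metrizable, hence paracompact.

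**The main obstacle** I anticipate is precisely this point-set topology in task (ii): verifying paracompactness (equivalently, metrizability) of the quotient $\cale_\beta$ cleanly. The quotient of a Polish space by a locally compact group action need not be Hausdorff in general, let alone metrizable, so one must genuinely use that the action is proper (Corollary~\ref{cor:proper-bundle}) to control the quotient topology. Once $\cale_\beta$ is known to be paracompact, everything closes quickly: local triviality plus paracompactness yields numerability, numerability plus the contractibility of $C_\beta$ from Proposition~\ref{contractive} yields universality via Theorem~\ref{thm-dold}. I would therefore spend the bulk of the proof justifying the metrizability of the base, and treat the invocations of Palais and Dold as the short concluding moves.
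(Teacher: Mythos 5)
Your proposal is correct and follows essentially the same route as the paper's proof: Corollary~\ref{cor:proper-bundle} gives the principal $G_\beta$-bundle structure, Theorem~\ref{thm-palais} gives local triviality, paracompactness of $\cale_\beta$ is obtained by proving metrizability (complete regularity via Proposition~\ref{proper-quotient} plus second countability via second countability of the Polish space $C_\beta$ and openness of $\pi$), and then Proposition~\ref{contractive} together with Theorem~\ref{thm-dold} yields universality. The only cosmetic differences are that the paper invokes Urysohn's metrization theorem directly and establishes openness of $\pi$ from local triviality, whereas you appeal to local metrizability and take openness of the quotient map as expected; both gaps close by standard arguments.
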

\begin{proof} 
In view of Corollary~\ref{cor:proper-bundle} and Proposition~\ref{contractive},
 in order to conclude that the bundle $\Xi_\beta$ is universal,  by 
Theorem~\ref{thm-dold} it only remains to show that $\Xi_\beta$ is numerable.  
Since $C_\beta$ is completely regular (it is Polish) and $G_\beta$ is a Lie group, 
we have that $\Xi_\beta$ is locally trivial by Theorem~\ref{thm-palais}. Thus 
to complete the proof it suffices to show that $\cale_\beta$ is paracompact. We will show that $\cale_\beta$ is second countable and regular, hence metrizable by Urysohn's metrization theorem, and metrizability implies paracompactness. It follows from Proposition~\ref{proper-quotient} and Corollary~\ref{cor:proper-bundle}
 that $\cale_\beta$ is completely 
regular. Thus it remains to establish that it is second countable. It suffices to 
show that the quotient map $\pi: C_\beta \to \cale_\beta$ is open, since  $C_\beta$ is second countable, as it is in fact Polish. That $\pi$ is open is a consequence of the local triviality of  the bundle, as follows. Let $\calo \subseteq C_\beta$ be an open subset. Let 
$U \in \calo$ be a given cocycle. It suffices to show that the point $\pi(U)$ has a
 neighborhood which is contained in $\pi(\calo)$. By local triviality, $\pi(U)$ has 
a neighborhood $\caln$ such that $\pi^{-1}(\caln)$ is homeomorphic to 
$\caln \times G_\beta$ via 
a homeomorphism which conjugates $\pi$ to the first coordinate projection. In
 particular, the restriction of $\pi$ to $\pi^{-1}(\caln)$ is open. Thus, 
$\pi(\calo \cap \pi^{-1}(\caln))$ is an open neighborhood of $U$ contained in 
$\pi(\calo)$, as required.
\end{proof}

Thus for continuous families of E$_0$-semigroups in the cocycle conjugacy class of $\beta$, we obtain the following.

\begin{cor} \label{spelling-out-cor}
Let $\beta$ be an E$_0$-semigroup whose gauge group is a Lie group. Let $\fkX$ be a compact Hausdorff space. For every continuous family $\alpha \in\cale_\beta$, define the locally trivial principal $G_\beta$-bundle over $\fkX$ given by the pullback $\xi(\alpha):=\alpha^*(\Xi_\beta)$. For all $\alpha, \gamma \in\cale_\beta(\fkX)$,
\begin{enumerate}
\item\label{1} $\xi(\alpha) \cong \xi(\gamma)$ if and only if  $\alpha$ is equivalent to $\gamma$.
\item\label{2} $\xi(\alpha)$ is trivial if and only if the family $\alpha$ is trivializable. 
\item\label{3} Any locally trivial principal $G_\beta$-bundle over $\fkX$ is isomorphic to $\xi(\alpha)$ for some  $\alpha \in \cale_\beta(\fkX)$.
\end{enumerate}
\end{cor}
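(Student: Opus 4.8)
The plan is to exploit throughout the concrete description of the fibers of $\Xi_\beta$: the fiber over a point $\rho\in\cale_\beta$ is exactly the set $\{W\in C_\beta : \pi(W)=\rho\}$ of $\beta$-cocycles implementing $\rho$, which (being a single $G_\beta$-orbit) is a right $G_\beta$-torsor. Consequently the total space of $\xi(\alpha)=\alpha^*(\Xi_\beta)$ is $\{(x,W)\in\fkX\times C_\beta : \alpha^x=\pi(W)\}$, and a continuous section of $\xi(\alpha)$ is the same thing as a continuous map $x\mapsto W^x$ from $\fkX$ to $C_\beta$ with $\pi(W^x)=\alpha^x$. Part (3) is then immediate from universality: since $\fkX$ is compact Hausdorff it is paracompact, so any locally trivial principal $G_\beta$-bundle $\eta$ over $\fkX$ is numerable, and because $\Xi_\beta$ is universal (by the theorem just proved) there is a continuous map $f\colon\fkX\to\cale_\beta$ with $\eta\cong f^*(\Xi_\beta)$; but such an $f$ is by definition an element $\alpha\in\cale_\beta(\fkX)$, and then $\xi(\alpha)\cong\eta$.

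For the forward direction of part (1), suppose $\alpha$ and $\gamma$ are equivalent, witnessed by a continuous family $\{U^x\}$ of $\gamma^x$-cocycles with $\alpha_t^x(A)=U_t^x\,\gamma_t^x(A)\,U_t^{x\,*}$. I would define $\Phi\colon\xi(\gamma)\to\xi(\alpha)$ by $\Phi(x,W)=(x,U^xW)$, where $(U^xW)_t=U_t^xW_t$. Using that $W$ is a $\beta$-cocycle with $\gamma_t^x(\cdot)=W_t\beta_t(\cdot)W_t^*$, a short computation shows that $U^xW$ is again a $\beta$-cocycle and that it implements $\alpha^x$, so $\Phi$ lands in the total space of $\xi(\alpha)$. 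Equivariance follows at once from associativity of pointwise multiplication, and continuity of $\Phi$ and of its inverse $(x,V)\mapsto(x,(U^x)^{-1}V)$ follows from continuity of $x\mapsto U^x$ together with continuity of multiplication and inversion in the Polish group $C(\rr_+,\uh)$. Hence $\Phi$ is an isomorphism of principal $G_\beta$-bundles.

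For the converse of part (1), given an isomorphism $\Phi\colon\xi(\gamma)\to\xi(\alpha)$ I would reconstruct the intertwining cocycles fiberwise: for each $x$ pick any $W$ in the fiber of $\xi(\gamma)$ over $x$, write $\Phi(x,W)=(x,V)$, and set $U^x:=VW^{-1}$ (pointwise). Since $\Phi$ is $G_\beta$-equivariant and the fibers are torsors, replacing $W$ by $WQ$ replaces $V$ by $VQ$, so $U^x$ does not depend on the choice of $W$; the computation of the previous paragraph, read in reverse, shows that $U^x$ is a $\gamma^x$-cocycle implementing $\alpha^x$. The only substantive point is continuity of $x\mapsto U^x$, and here I would use that $\xi(\gamma)$, being a pullback of the locally trivial bundle $\Xi_\beta$, is itself locally trivial: near any point there is a continuous local section $x\mapsto W^x$, and on that neighborhood $U^x=V^x(W^x)^{-1}$ with $(x,V^x)=\Phi(x,W^x)$ is manifestly continuous into $C(\rr_+,\uh)$. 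Since $U^x$ is independent of the choice of section, these local formulas agree on overlaps and patch to a globally continuous family, establishing that $\alpha$ and $\gamma$ are equivalent.

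Finally, part (2) is the special case of part (1) in which $\gamma$ is the constant family equal to $\beta$: this family pulls $\Xi_\beta$ back along a constant map, which gives a trivial bundle, so ``$\alpha$ trivializable'' means precisely ``$\alpha$ equivalent to the constant family $\beta$'', and by part (1) this holds if and only if $\xi(\alpha)\cong\xi(\text{const }\beta)$ is trivial. (Alternatively one may argue directly, via the correspondence between sections of $\xi(\alpha)$ and continuous families of $\beta$-cocycles implementing $\alpha$ noted above, that $\xi(\alpha)$ admits a global section if and only if $\alpha$ is trivializable.) I expect the only real obstacle to be the continuity assertion in the converse of part (1): the cocycle identities are routine algebra, but upgrading the fiberwise, choice-independent family $\{U^x\}$ to a globally continuous one genuinely requires the local triviality of the bundle, while all remaining verifications are bookkeeping with the equivariant torsor structure.
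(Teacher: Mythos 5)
Your proposal is correct and follows essentially the same route as the paper: your fiberwise ``ratio'' $U^x = V W^{-1}$ in the converse of (1) is exactly the paper's formula $V^x_t = U^x_t\, j(U^x_t)^*$, with well-definedness from $G_\beta$-equivariance and continuity from local triviality, and parts (2) and (3) are handled identically (special case of (1), and universality plus numerability over the paracompact space $\fkX$, respectively). The only difference is that you spell out details (the cocycle-product computations and the patching of local sections) that the paper leaves as ``clear'' or ``straightforward.''
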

\begin{proof}
For (\ref{1}), it is clear that if $\alpha$ and $\gamma$ are equivalent then $\xi(\alpha) \cong \xi(\gamma)$. For the converse, let $j: \xi(\alpha) \to \xi(\gamma)$ be an isomorphism. We define a continuous family of $\gamma$-cocycles $\{V^x : x\in \fkX\}$ as follows: for each $t \geq 0$, $x\in\fkX$, let $U_t^x$ be any element in the fiber of $\xi(\alpha)$ over $x$. We let 
$$
V_t^x = U_t^x j(U_t^x)^*
$$
We note that $V_t^x$ does not depend on the choice of $U_t^x$, because $j$ is $G$-equivariant: if $W_t^x$ is another element of the fiber of $\xi(\alpha)$ over $x$, then there exists $Q \in G_\beta$ such that $W^x_t = U_t^x Q_t$ and 
$ W_t^x j(W_t^x)^* =  U_t^x Q_t j(U_t^x Q_t)^* =  U_t^x Q_t Q_t^* j(U_t^x)^* =V_t^x$. It is straightforward to verify that $V^x$ is a $\gamma^x$-cocycle. Continuity now follows from local triviality of $\xi(\alpha)$. 

Part \eqref{2}  follows immediately from item \eqref{1}, and part \eqref{3} follows directly from universality of the bundle $\Xi_\beta$.
\end{proof}

We remark that, in general, the gauge group of an E$_0$-semigroup need not be locally compact. For example, let us consider E$_0$-semigroups of type I. 

 Let $\K$ be a separable Hilbert space, which is allowed to have dimension zero. We consider the Heisenberg group $\heisen(\K) = \{ (c,\xi) : c\in \rr, \xi \in \K\}$ with the product topology and the operations
 \begin{align*}
 (c_1, \xi_1) \cdot (c_2, \xi_2) & = (c_1 +c_2 + \Im \langle \xi_1, \xi_2 \rangle,\; \xi_1 + \xi_2) \\
 (c, \xi)^{-1} & = ( -c, -\xi)
 \end{align*}
 The unitary group $\calu(\K)$ acts on $\heisen(\K)$ by automorphisms as follows,
 $$
 U \cdot (c, \xi) = (c, U\xi)
 $$
 hence we have a semidirect product $\heisen(\K) \rtimes \calu(\K)$. This is a Polish group with the product topology (here $\calu(\K)$ has the strong operator topology as usual). Its operations can be
 described explicitly by the following:
\begin{align*}
(c_1, \xi_1, U_1) \cdot (c_2, \xi_2, U_2) & = (c_1 + c_2 + \Im \langle \xi_1, U_1\xi_2 \rangle, \; \xi_1 + U_1 \xi_2, U_1 U_2 )\\
(c, \xi, U)^{-1} & = ( -c , -U^{-1}\xi, U^{-1})
\end{align*}
Naturally, when $\dim \K=0$, we mean $\heisen(\K) \rtimes \calu(\K)=\rr$. 

\begin{theorem}[\cite{arv-analogues}]\label{typeI-gauge-group}
Let $\beta$ be an E$_0$-semigroup of type I, and let $\K$ be a Hilbert space such that $\dim \K = \ind \beta$. Then $\gauge_\beta$ is isomorphic as a topological group to $\heisen(\K) \rtimes \calu(\K)$.
\end{theorem}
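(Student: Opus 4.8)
The plan is to reduce to a concrete model and then compute directly. First I would observe that the gauge group is an invariant of the cocycle conjugacy class: if $\alpha_t = \mathrm{Ad}(U_t)\circ\beta_t$ for some $U\in C_\beta$, then the map $Q\mapsto\{U_tQ_tU_t^*\}$ carries $\gauge_\beta$ onto $\gauge_\alpha$ and is a topological group isomorphism. The only nonformal point in checking this is the cocycle identity, which comes out because any $Q\in\gauge_\beta$ commutes with $\beta_s(U_t)$, this being an element of the range $\beta_s(\bh)$. By Arveson's classification of type I E$_0$-semigroups, $\beta$ is cocycle conjugate to the CCR flow of rank $\K$, where $\dim\K = \ind\beta$; so it suffices to compute the gauge group of this CCR flow, which I would realize concretely on the symmetric Fock space $\mathcal{F} = \mathcal{F}_+(L^2(\rr_+,\K))$ with Weyl operators $W(f)$, shift $S_t$, and $\beta_t(W(f)) = W(S_tf)$.

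Next I would identify gauge cocycles with automorphisms of the associated product system. A gauge cocycle $Q_t$ commutes with $\beta_t(\bh)$ and hence lies in the commutant $\beta_t(\bh)'$, which is the type I factor generated by the Weyl operators with $\supp f\subseteq(0,t)$, i.e.\ a unitary acting on the fiber $E(t) = \mathcal{F}_+(L^2((0,t),\K))$. The cocycle identity $Q_{s+t} = Q_s\beta_s(Q_t)$ translates exactly into multiplicativity with respect to the concatenation product on $\{E(t)\}$, so $\{Q_t\}$ is precisely an automorphism of the exponential product system $E_\K$. Since $E_\K$ is of type I, it is generated by its units, and therefore any automorphism is determined by its action on the set of units. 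The units are the exponential sections $u^{(\lambda,\xi)}_t = e^{\lambda t}\,e(\charac_{(0,t)}\xi)$, parametrized by $(\lambda,\xi)\in\cc\times\K$, with covariance $\<u^{(\lambda,\xi)}_t, u^{(\mu,\eta)}_t\> = \exp\big(t(\bar\lambda+\mu+\<\xi,\eta\>)\big)$.

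The core computation is then the classification of the bijections of the unit space $\cc\times\K$ that preserve this covariance function and the affine unit structure. I would show that such a map is necessarily a composition of: (i) a pure phase $(\lambda,\xi)\mapsto(\lambda+ir,\xi)$, $r\in\rr$; (ii) a translation $(\lambda,\xi)\mapsto(\lambda-\<a,\xi\>-\tfrac12\|a\|^2,\xi+a)$ by a vector $a\in\K$; and (iii) the action $(\lambda,\xi)\mapsto(\lambda,U\xi)$ of a unitary $U\in\calu(\K)$. A direct expansion shows each of these preserves $\bar\lambda+\mu+\<\xi,\eta\>$, and that conversely there are no others. Computing the composition law is where the Heisenberg structure appears: because only the antisymmetric part $\Im\<\cdot,\cdot\>$ of the inner product survives in the central phase when two translations are composed, and because conjugating a translation by $U$ replaces $a$ by $Ua$, the resulting multiplication is exactly
$$
(c_1,\xi_1,U_1)(c_2,\xi_2,U_2) = (c_1+c_2+\Im\<\xi_1,U_1\xi_2\>,\ \xi_1+U_1\xi_2,\ U_1U_2),
$$
identifying the group of such maps with $\heisen(\K)\rtimes\calu(\K)$.

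Finally I would check that this algebraic isomorphism is a homeomorphism. Running the translations (ii) and phases (i) back through the Weyl operators recovers the cocycles $t\mapsto W(\charac_{(0,t)}\xi)$ and $t\mapsto e^{ict}$, while (iii) recovers the second quantization cocycles $t\mapsto\Gamma(U_t)$ associated to $U\in\calu(\K)$; strong continuity of $t\mapsto Q_t$ then corresponds to continuity of the parameters $(c,\xi,U)$ with the strong operator topology on $\calu(\K)$, and the matching of the two topologies is routine. The main obstacle is the middle step: establishing that gauge cocycles are exactly the product-system automorphisms and that these in turn are exhausted by the affine family above, since this is precisely where the type I hypothesis (units generate the product system) is essential. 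Everything surrounding it is bookkeeping.
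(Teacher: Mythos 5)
The paper itself contains no proof of this theorem: it is imported verbatim from Arveson's memoir \cite{arv-analogues}, so there is no internal argument to compare yours against. Your sketch is, in substance, Arveson's own proof --- reduction to the CCR flow of index $\ind\beta$ using cocycle-conjugacy invariance of the gauge group, identification of local cocycles with automorphisms of the exponential product system via $Q_t \in \beta_t(\bh)'$, classification of the covariance-preserving bijections of the unit space $\cc\times\K$, and the final topology check --- and it is correct; in particular the step you flag as the core computation does go through, since a covariance-preserving bijection $(\lambda,\xi)\mapsto(\Lambda,X)$ must have $X$ independent of $\lambda$ and affine with unitary linear part (differences of $X$-values preserve inner products and span $\K$ by surjectivity), after which $\Lambda$ is forced to equal $\lambda+ir-\<a,U\xi\>-\tfrac12\|a\|^2$, exactly your families (i)--(iii), and composing translations indeed produces the central $\Im\<\cdot,\cdot\>$ term of $\heisen(\K)\rtimes\calu(\K)$.
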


It follows that in the case $\ind \beta = \infty$ the gauge group is manifestly not locally compact. Nevertheless, Theorem~\ref{typeI-gauge-group} implies that when the gauge group of an E$_0$-semigroup of type I is locally compact then it is a Lie group. That is also the case for many of the E$_0$-semigroups of type II$_0$ analyzed recently by \cite{alevras-powers-price} and \cite{jankowski-markiewicz}. In fact, to our knowledge in all cases when the gauge group has been proven to be locally compact, it has also turned out to be a Lie group.

Suppose $\dim (\K) = n < \infty$. Since $\heisen(\K)$ is contractible,  $\heisen(\K) \rtimes \calu(\K)$-bundles are prolongations of $\calu(\K)$-bundles (see Chapter~6 of \cite{husemoller}). In turn there is a functorial  bijection between $\calu(\K)$-bundles and the corresponding associated $n$-dimensional (complex) vector bundles. Thus for the case of continuous families of E$_0$-semigroups of type I$_n$, Corollary \ref{spelling-out-cor} is equivalent to the following form. Note the analogy with Theorem~1 of \cite{hirshberg-endo-stable-cts-trace}. 

\begin{cor} 
Suppose that $\beta$ is an E$_0$-semigroup of type I$_n$, and let $\fkX$ be a compact Hausdorff space. To each continuous family $\alpha \in\cale_\beta(\fkX)$ we have an associated $n$-dimensional vector bundle $v(\alpha)$ over $\fkX$ such that the following hold. For all $\alpha, \gamma \in\cale_\beta(\fkX)$,
\begin{enumerate}
\item $v(\alpha) \cong v(\gamma)$ if and only if $\alpha$ is  equivalent to $\gamma$.
\item $v(\alpha)$ is trivial if and only if the family $\alpha$ is trivializable. 
\item Any $n$-dimensional vector bundle over $\fkX$ is isomorphic to $v(\alpha)$ for some  $\alpha \in\cale_\beta(\fkX)$.
\end{enumerate}
\end{cor}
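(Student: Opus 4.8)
The plan is to deduce this corollary from Corollary~\ref{spelling-out-cor} by passing through two functorial bijections on isomorphism classes of bundles over $\fkX$. Write $G = \gauge_\beta$, and use Theorem~\ref{typeI-gauge-group} to identify $G \cong \heisen(\K) \rtimes \calu(\K)$ with $\dim\K = n$; set $N = \heisen(\K)$ and $H = \calu(\K) \cong U(n)$. Since $n < \infty$, $G$ is a Lie group, and because $\fkX$ is compact Hausdorff (hence paracompact) and $\xi(\alpha)$ is the pullback of the locally trivial numerable universal bundle $\Xi_\beta$, every bundle in sight is locally trivial and numerable. The first bijection (a) identifies isomorphism classes of principal $G$-bundles over $\fkX$ with isomorphism classes of principal $H$-bundles, via prolongation (extension of structure group); the second (b) is the classical correspondence between principal $U(n)$-bundles and $n$-dimensional Hermitian vector bundles furnished by the associated-bundle construction. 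I would then define $v(\alpha)$ to be the vector bundle obtained by applying (a) and then (b) to $\xi(\alpha)$.

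For bijection (a), the crucial point is that $G$ is the semidirect product $N \rtimes H$ in which $N$ is a normal subgroup homeomorphic to $\rr \times \K \cong \rr^{2n+1}$, hence contractible, and $G/H \cong N$. Consequently the inclusion $H \hookrightarrow G$ is a homotopy equivalence, and the prolongation functor from principal $H$-bundles to principal $G$-bundles is a bijection on isomorphism classes over paracompact bases, with inverse given by reduction of structure group (Chapter~6 of \cite{husemoller}). Concretely, a reduction of a principal $G$-bundle to $H$ is a section of the associated fiber bundle with fiber $G/H \cong N$; since $N$ is contractible and the base is paracompact, such sections exist and are unique up to homotopy, which yields both surjectivity and injectivity of prolongation. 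This correspondence manifestly carries trivial bundles to trivial bundles in both directions.

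With (a) and (b) in hand, the three assertions follow by chaining the bijections with Corollary~\ref{spelling-out-cor}. For (1), $v(\alpha) \cong v(\gamma)$ holds if and only if the associated $U(n)$-reductions are isomorphic, if and only if $\xi(\alpha) \cong \xi(\gamma)$ (both (a) and (b) being bijections on isomorphism classes), if and only if $\alpha$ is equivalent to $\gamma$ by Corollary~\ref{spelling-out-cor}(\ref{1}). Assertion (2) follows the same way, using that (a) and (b) preserve triviality together with Corollary~\ref{spelling-out-cor}(\ref{2}). For (3), given any $n$-dimensional vector bundle over $\fkX$, I would apply the inverses of (b) and (a) to obtain a principal $G$-bundle, which by Corollary~\ref{spelling-out-cor}(\ref{3}) is isomorphic to $\xi(\alpha)$ for some $\alpha \in \cale_\beta(\fkX)$; by construction its associated vector bundle is $v(\alpha)$, isomorphic to the given one.

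The main obstacle is establishing bijection (a) rigorously, that is, that prolongation along the contractible fiber $N = \heisen(\K)$ is a genuine bijection rather than merely a surjection or an arbitrary map. Both surjectivity (every $G$-bundle reduces to $H$) and injectivity (the reduction is unique up to isomorphism) rest entirely on the contractibility of $N$: the former on the existence of sections of a fiber bundle with contractible fiber over a paracompact base, and the latter on the fact that any two such sections are homotopic, so that the resulting $H$-reductions are isomorphic. The one technical care needed is to ensure that the numerability and paracompactness hypotheses under which these section-theoretic facts hold are in force; this is guaranteed because $\fkX$ is compact Hausdorff and the bundles are pullbacks of the locally trivial numerable universal bundle $\Xi_\beta$.
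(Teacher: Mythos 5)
Your proposal is correct and takes essentially the same approach as the paper: the paper's justification is exactly the paragraph preceding the corollary, which identifies $\gauge_\beta \cong \heisen(\K)\rtimes\calu(\K)$ via Theorem~\ref{typeI-gauge-group}, uses contractibility of $\heisen(\K)$ to get the prolongation bijection with $\calu(\K)$-bundles (citing Chapter~6 of \cite{husemoller}), passes to associated $n$-dimensional vector bundles, and then invokes Corollary~\ref{spelling-out-cor}. Your write-up simply supplies the section-theoretic details (sections of a bundle with contractible fiber over a paracompact base, uniqueness up to homotopy) that the paper delegates to the reference.
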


 We note that if $G_\beta$ is a Lie group, then it follows that a $\sigma$-weakly continuous family is a continuous family if and only if it is locally trivial. We have seen that local triviality (indeed, triviality) is automatic in the case of automorphism groups, but we do not know if this is the case for E$_0$-semigroups. 
However, it follows from the corollaries above that there exist topological obstructions for having a parametrized version of Wigner's theorem in the $\sigma$-weakly continuous setting. For instance, we have the following.

\begin{cor}
If $\fkX$ is a compact Hausdorff space which admits a non-trivial $n$-dimensional complex vector bundle, then there exists a (locally trivial) $\sigma$-weakly continuous family of E$_0$-semigroups of type I$_n$ parametrized by $\fkX$ which is not trivializable. 
\end{cor}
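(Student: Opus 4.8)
The plan is to obtain this as a direct consequence of the preceding (vector bundle) corollary, combined with the existence of type I$_n$ E$_0$-semigroups and the continuity observations already recorded earlier in the paper. The entire analytic content has been front-loaded into the classification results, so the task here is essentially one of assembling the right pieces.

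First I would fix a concrete E$_0$-semigroup $\beta$ of type I$_n$; such semigroups exist for every $n\geq 1$ (the CCR flows of index $n$, cf.\ \cite{arv-monograph,arv-analogues}), and this existence is exactly what guarantees the statement is non-vacuous. By Theorem~\ref{typeI-gauge-group} its gauge group is $\gauge_\beta \cong \heisen(\K)\rtimes\calu(\K)$ with $\dim\K=\ind\beta=n<\infty$, which is a Lie group. Consequently the hypotheses of the preceding vector bundle corollary are met for this $\beta$ and the given compact Hausdorff space $\fkX$.

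Next, let $v$ be a non-trivial $n$-dimensional complex vector bundle over $\fkX$, which exists by hypothesis. By item (3) of that corollary (every $n$-dimensional vector bundle is of the form $v(\alpha)$ up to isomorphism) I would produce a continuous family $\alpha\in\cale_\beta(\fkX)$ with $v(\alpha)\cong v$. Since $v$ is non-trivial, item (2) of the same corollary forces $\alpha$ to be non-trivializable, which is the essential assertion.

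Finally I would verify the stated continuity properties of $\alpha$. Being an element of $\cale_\beta(\fkX)$, the family $\alpha$ is continuous, hence $\sigma$-weakly continuous by the observation recorded in the introduction that every continuous family is $\sigma$-weakly continuous. Moreover, since $\gauge_\beta$ is a Lie group, the remark that a $\sigma$-weakly continuous family is continuous precisely when it is locally trivial shows that $\alpha$ is locally trivial. Thus $\alpha$ is a locally trivial, $\sigma$-weakly continuous family of E$_0$-semigroups of type I$_n$ over $\fkX$ which fails to be trivializable, as required. There is no genuine obstacle in this argument; the only points demanding care are selecting the correct items of the vector bundle corollary and confirming that a type I$_n$ model $\beta$ is available, so that the conclusion is not vacuous.
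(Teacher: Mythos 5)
Your proposal is correct and follows exactly the route the paper intends: the paper states this corollary without a separate proof, as an immediate consequence of the type I$_n$ vector bundle corollary (items (2) and (3)), Theorem~\ref{typeI-gauge-group} guaranteeing a Lie gauge group, and the earlier remarks that continuous families are $\sigma$-weakly continuous and, when $\gauge_\beta$ is a Lie group, locally trivial. Your write-up simply makes this assembly explicit, including the (needed, and correct) observation that type I$_n$ E$_0$-semigroups exist, so nothing is missing.
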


\providecommand{\bysame}{\leavevmode\hbox to3em{\hrulefill}\thinspace}

\end{document}